\documentclass[10pt]{amsart}

\textheight=180mm \textwidth=148mm \topmargin=25mm
\oddsidemargin=-0mm \evensidemargin=0mm \parindent=0mm

\usepackage{amsthm,amsmath,amssymb,amscd}
\usepackage{amssymb}
\usepackage{graphics}

\usepackage{color}

\newtheorem{teor}{Theorem}[section]
 
\newtheorem{defin}[teor]{Definition}
\newtheorem{prop}[teor]{Proposition}

\newtheorem{lemma}[teor]{Lemma}

\newtheorem*{claim}{Claim}

\theoremstyle{definition}
\newtheorem{remar}[teor]{Remark}

\newcommand{\K}{K\"{a}hler}

\newcommand{\Ric}{\operatorname{Ric}}

\newcommand{\p}{\partial}

\begin{document}

\title{On the curvature of conic K\"ahler - Einstein metrics}
\author[Claudio Arezzo] {Claudio Arezzo}
\address{ arezzo@ictp.it }
\author{Alberto Della Vedova}
\address{alberto.dellavedova@unimib.it}
\author{Gabriele La Nave}
\address{lanave@illinois.edu}

\begin{abstract}
We prove a regularity result for Monge-Amp\`ere equations degenerate along smooth divisor on \K\ manifolds in Donaldson's 
spaces of $\beta$-weighted functions. We apply this result to study the curvature of \K\ metrics with conical singularities and give a geometric sufficient condition on the divisor for its boundedness.

\end{abstract}
\maketitle

\vspace{-,15in}

{\it{1991 Math. Subject Classification:}} 58E11, 32C17.

\section{Introduction}

Let $M$ be a compact complex manifold of dimension $n$ equipped with a \K\ form $\omega$. In this paper we study the curvature of \K\ metrics on $M\setminus D$ with cone singularities of cone angle $2\pi\beta$ transverse to 
a smooth divisor $D$, where $0<\beta<1$. This type of metrics were introduced by Tian \cite{tianlect} in the K\"ahler case 
 and have recently become
a key ingredient in the solution of the Tian-Yau-Donaldson conjecture about the existence of \K -Einstein metrics of positive curvature \cite{cds1, cds2, cds3, t}.

Metrics with this type of singularity can be produced by using as local potentials functions in the space $\mathcal D_w^{0,\alpha}$.
Such spaces were introduced by Donaldson \cite{Donaldson} and were denoted by $\mathcal C^{2,\alpha,\beta}$. In this paper we follow the notation of Jeffres, Mazzeo and Rubinstein \cite{jmr}.
A prototype for elements of $\mathcal D_w^{0,\alpha}$ is a function of the form $f + |s|^{2\beta}$, where $f$ is a smooth function on $M$ and, denoting by $(L,h)$ the holomorphic line bundle associated with $D$ equipped with a Hermitian metric, $|s|^2$ is the squared norm of a defining section for $D$.

An intriguing and important aspect of this theory is the behavior of the Riemann curvature tensor near the singularity.
This point has drawn the attention of many authors \cite{br,lr,sw,Donaldson}.
Besides its clear intrinsic interest, a great motivation for finding a \K\ conical metric with bounded curvature lies in the fact that once this is achieved, existence theorems for a large class of important PDEs become much easier and, for example, solving a conical version
of the Calabi Conjecture or \K -Einstein metrics can be done essentially relying on Yau's classical estimates, hence avoiding many of the
deep difficulties overcome in the general case by Jeffres, Mazzeo and Rubinstein \cite{jmr}.

It is then very natural to ask whether there exist geometric conditions which guarantee the boundedness of the Riemann tensor
at least for some conic metrics.
Our first result is the following

\begin{teor} \label{main1}

Let $(M,\omega)$ be a compact K\"ahler manifold, and let $(L,h)$ be the line bundle on $M$ associated to a smooth divisor $D$ equipped with a Hermitian metric.  Assume there exist $U \subset M$ and $V \subset L|_D$ neighborhoods of $D$ and a biholomorphism $\Upsilon : U \to V$ extending the identity on $D$.

Then for any $\alpha,\beta \in (0,1)$ with $\alpha<\beta^{-1}-1$ the following hold:
\begin{enumerate}
\item \label{exKbound}
in each \K\ class there exists a metric conic along $D$ of angle $2 \pi \beta$ with $\alpha$-H\"older continuous curvature,
\item \label{exCalabi}
given $\Omega \in c_1(M) - (1 - \beta) c_1(L)$ with local potentials in $\mathcal D_w^{0,\alpha}$, any K\"ahler metric $\tilde\omega$ conic along $D$ of angle $2 \pi \beta$ satisfying $\Ric(\tilde\omega) = \Omega +(1-\beta)\delta_D$ has $\alpha$-H\"older continuous curvature.
\end{enumerate}
\item
Moreover if $\lambda [\omega] = c_1(M) - (1 - \beta) c_1(L)$ for some real $\lambda$, then any \K -Einstein metric in $[\omega]$ conic along $D$ of angle $2 \pi \beta$ has $\alpha$-H\"older continuous curvature.

\end{teor}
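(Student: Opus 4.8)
The plan is to reduce the Kähler--Einstein case to part~(\ref{exCalabi}). Let $\tilde\omega$ be a Kähler--Einstein metric in $[\omega]$ conic along $D$ of angle $2\pi\beta$, so that $\Ric(\tilde\omega)=\mu\,\tilde\omega+(1-\beta)\delta_D$ for some $\mu\in\R$. Taking cohomology classes and using $[\delta_D]=c_1(L)$ gives $c_1(M)=\mu[\omega]+(1-\beta)c_1(L)$, hence $(\mu-\lambda)[\omega]=0$; since a Kähler class is nonzero in $H^2(M;\R)$ this forces $\mu=\lambda$. Setting $\Omega:=\lambda\,\tilde\omega$, we then have $\Omega\in\lambda[\omega]=c_1(M)-(1-\beta)c_1(L)$ and $\Ric(\tilde\omega)=\Omega+(1-\beta)\delta_D$. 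Thus, as soon as we know that $\tilde\omega$ (equivalently $\Omega$) has local potentials in $\mathcal D_w^{0,\alpha}$, every hypothesis of part~(\ref{exCalabi}) is met and we conclude that $\tilde\omega$ has $\alpha$-Hölder continuous curvature.

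So the one point requiring work is the regularity of the Kähler--Einstein potential. Writing $\tilde\omega=\omega+dd^c\phi$ and combining $\Ric(\tilde\omega)-\Ric(\omega)=-dd^c\log(\tilde\omega^n/\omega^n)$ with the Poincaré--Lelong identity $\delta_D=\Theta_h+dd^c\log|s|^2$ (with $\Theta_h$ a smooth representative of $c_1(L)$) and the hypothesis $\lambda[\omega]=c_1(M)-(1-\beta)c_1(L)$, which furnishes a smooth $f$ with $dd^c f=\lambda\omega+(1-\beta)\Theta_h-\Ric(\omega)$, the Einstein equation becomes the complex Monge--Ampère equation
\[
(\omega+dd^c\phi)^n=e^{\,f-\lambda\phi}\,|s|^{-2(1-\beta)}\,\omega^n ,
\]
the additive constant being absorbed into $f$. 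A priori $\phi\in\mathcal D_w^{0,\gamma}$ for some $\gamma\in(0,\beta^{-1}-1)$ — by the very definition of a metric conic along $D$, or, for weak solutions, by the regularity theory of Jeffres--Mazzeo--Rubinstein \cite{jmr}. Then $f-\lambda\phi\in\mathcal D_w^{0,\gamma}$, so the right-hand side above has exactly the shape to which our Monge--Ampère regularity result applies, and a bootstrap places $\phi$ in $\mathcal D_w^{0,\alpha}$ for every $\alpha<\beta^{-1}-1$ (equivalently, this is the optimal weighted Hölder, indeed polyhomogeneous, regularity of conic Kähler--Einstein metrics). Hence $\Omega=\lambda\tilde\omega$ has local potentials in $\mathcal D_w^{0,\alpha}$ and the last statement follows from part~(\ref{exCalabi}).

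I expect the main obstacle to be this bootstrap rather than the (formal) cohomological reduction: one must show that a conic Kähler--Einstein potential, a priori controlled only in a low-regularity $\beta$-weighted space, in fact lies in $\mathcal D_w^{0,\alpha}$ all the way up to the critical exponent $\beta^{-1}-1$. Concretely this amounts to checking that, in the Monge--Ampère regularity theorem, the nonlinear term $e^{-\lambda\phi}$ and the singular density $|s|^{-2(1-\beta)}$ are compatible with the relevant model (edge) operator and introduce no obstructing indicial root below $\beta^{-1}-1$; once the potential is known to lie in $\mathcal D_w^{0,\alpha}$, the Hölder bound on the curvature is inherited verbatim from part~(\ref{exCalabi}).
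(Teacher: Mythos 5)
Your cohomological normalization $\mu=\lambda$ and the formal reduction of the K\"ahler--Einstein clause to part \eqref{exCalabi} are fine and agree with what the paper does (via Proposition \ref{C2abricci}). But your proposal proves only that clause: parts \eqref{exKbound} and \eqref{exCalabi} --- which carry essentially all of the content of the theorem --- are nowhere established, and the Grauert hypothesis on $\Upsilon:U\to V$ is never used. In the paper the theorem is deduced from three ingredients: Theorem \ref{degen}, which uses the Grauert condition to produce a function $u$, constant along $D$, such that $(\omega+i\partial\bar\partial u)^n$ vanishes to infinite order along $D$, so that $\omega_0=\omega+i\partial\bar\partial(u+|s|^{2\beta})$ is a conic background metric whose volume ratio $|s|^{2-2\beta}\omega_0^n/\omega^n$ is a positive function of class $\mathcal D_w^{0,\alpha}$; Proposition \ref{C2abricci}, which converts the prescribed-Ricci or Einstein condition into an equation $\omega_\phi^n=e^{f}\omega_0^n$ with $f\in\mathcal D_w^{0,\alpha}$; and Theorem \ref{boundedcurvatureintro}, the regularity statement for such equations. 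None of this machinery appears in your argument.

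The step that actually fails as written is the claim that $(\omega+dd^c\phi)^n=e^{f-\lambda\phi}\,|s|^{-2(1-\beta)}\,\omega^n$ ``has exactly the shape to which our Monge--Amp\`ere regularity result applies.'' It does not: Theorem \ref{boundedcurvatureintro} requires the equation in the form $\omega_\phi^n=e^f\omega_0^n$ with $f\in\mathcal D_w^{0,\alpha}$ \emph{relative to the special background} $\omega_0$, and the density $|s|^{-2(1-\beta)}$ relative to the smooth metric $\omega$ cannot in general be absorbed into such an $f$, because $|s|^{2-2\beta}$ is not of class $\mathcal D_w^{0,\alpha}$ when $\beta>1/2$ (the paper says this explicitly right after Theorem \ref{degen}). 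Removing exactly this obstruction is what the exponentially degenerate volume form of $\omega+i\partial\bar\partial u$ --- hence the Grauert condition --- is for. Finally, the ``bootstrap'' placing $\phi$ in $\mathcal D_w^{0,\alpha}$ up to the critical exponent is asserted rather than proved; in the paper this issue does not arise, since membership of the potential in $\mathcal D_w^{0,\alpha}$ is part of the hypotheses (it is built into the definition of a conic metric and into the statement of Theorem \ref{boundedcurvatureintro}). So the reduction you perform is the easy, formal part of the proof, while the parts you defer or assert are where the theorem is actually proved.
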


General existence results related to point \eqref{exCalabi} of the theorem above and conic \K-Einstein metrics with $\lambda \leq 0$ follow by the work of Brendle \cite{br} for $\beta <1/2$, and  
Jeffres, Mazzeo and Rubinstein \cite{jmr} for general $\beta$.

Theorem \ref{main1} is in fact a corollary (using the mentioned fundamental works) of the following general regularity theorem for solutions
of complex Monge-Amp\`ere equations which is proved in Section \ref{sectCCM}. This result is likely to follow also from the theory 
developed in \cite{jmr} but we give an elementary proof which we believe is of independent interest:

\begin{teor}\label{boundedcurvatureintro}
Let $\alpha,\beta \in (0,1)$ with $\alpha<\beta^{-1}-1$, and let $\omega$ be a smooth K\"ahler metric on $M$.
Assume there exist a real number $k \geq 2\beta-1$ and non-negative smooth functions $u, F$ on $M$, with $u$ constant along $D$, such that
\begin{equation}
\left( \omega + i\partial \bar \partial u \right)^n = |s|^{2k} F \omega^n.
\end{equation}
Assume moreover that $\omega_0 = \omega + i \partial \bar \partial (u+|s|^{2\beta})$ is a conic K\"ahler metric of angle $2 \pi \beta$.

If $\omega_\phi= \omega_0 + i\partial \bar \partial \phi$ is a conic K\"ahler metric with $\phi \in \mathcal D_w^{0,\alpha}$ and it satisfies on $M \setminus D$ an equation of the form:
\begin{equation}\label{fakeMA2} \omega_\phi^n= e^f \omega_0^n \end{equation} 
with $f \in \mathcal D_w^{0,\alpha}$.
Then the norm of the Riemannian curvature $||Rm (\omega _{\phi})||_{\omega _\phi}$ is $\alpha$-H\"older continuous on $M$.

\end{teor}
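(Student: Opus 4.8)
The plan is to reduce the statement to a purely local question about the complex Monge--Ampère equation near $D$ and then to run a bootstrap argument inside Donaldson's spaces $\mathcal D_w^{0,\alpha}$. First I would fix a point $p \in D$ and choose adapted holomorphic coordinates $(z_1,\dots,z_n)$ in which $D = \{z_n = 0\}$ and $|s|^2 = |z_n|^2 e^{\psi}$ for a smooth $\psi$. In these coordinates one passes to the singular chart $w_n = z_n^{\beta}$ (or the wedge metric model $|z_n|^{2\beta-2} |dz_n|^2$ on the last factor), so that $\omega_0$ is uniformly equivalent to the model conic metric $\omega_{\mathrm{cone}} = \sum_{j<n} i\, dz_j \wedge d\bar z_j + \beta^2 |z_n|^{2\beta-2} i\, dz_n \wedge d\bar z_n$. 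The hypothesis that $(\omega + i\p\bar\p u)^n = |s|^{2k} F\,\omega^n$ with $k \ge 2\beta-1$ and $u$ constant along $D$, together with $\omega_0 = \omega + i\p\bar\p(u+|s|^{2\beta})$ being genuinely conic, is exactly what guarantees that the \emph{background} conic metric $\omega_0$ already has curvature controlled in the $C^{0,\alpha}$-sense in the wedge chart: this is where the constraint $\alpha < \beta^{-1}-1$ enters, since $|z_n|^{2\beta-2}$ and its relevant derivatives land in $\mathcal D_w^{0,\alpha}$ precisely under that inequality. I would isolate this as a preliminary lemma (likely proved earlier in Section~\ref{sectCCM}), reducing the theorem to the case where $\omega_0$ itself is a model wedge metric with $C^{0,\alpha}$ curvature, perturbed by $i\p\bar\p\phi$ with $\phi \in \mathcal D_w^{0,\alpha}$.

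Next I would set up the linearization. Writing $\omega_\phi^n = e^f \omega_0^n$ and differentiating, the metric coefficients $g_{i\bar j}$ of $\omega_\phi$ satisfy a nonlinear second-order elliptic system whose linearization at $\omega_0$ is the Laplacian $\Delta_{\omega_0}$ acting on the potential. The strategy is a Schauder-type bootstrap adapted to the edge geometry: using the Jeffres--Mazzeo--Rubinstein edge Schauder estimate (or Donaldson's original interior estimate for $\mathcal C^{2,\alpha,\beta}$), from $f \in \mathcal D_w^{0,\alpha}$ and $\phi \in \mathcal D_w^{0,\alpha}$ one upgrades $\phi$ to have $i\p\bar\p\phi$ (equivalently the full Hessian in the edge sense, including the $\p_{z_n}\p_{z_n}$, $\p_{z_n}\p_{\bar z_j}$ and tangential components) in $\mathcal D_w^{0,\alpha}$. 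Concretely: rewrite \eqref{fakeMA2} as $\det(g_{0,i\bar j} + \phi_{i\bar j}) = e^f \det(g_{0,i\bar j})$, take $\log$, and observe that the standard identity
\begin{equation*}
\Delta_{\omega_\phi}\!\left( \tr_{\omega_0}\!\omega_\phi \right) \ \geq\ -C\bigl(\text{bisectional curvature of }\omega_0\bigr)\,(\tr_{\omega_0}\!\omega_\phi)^2 \ +\ \Delta_{\omega_0} f
\end{equation*}
together with the $C^{0,\alpha}$ control on the curvature of $\omega_0$ from the first step, gives a Laplacian inequality for $\tr_{\omega_0}\omega_\phi$ with right-hand side in $\mathcal D_w^{0,\alpha}$; the edge maximum principle and edge Schauder theory then put $\tr_{\omega_0}\omega_\phi$, and with the volume normalization also all of $g_{i\bar j}$, in $\mathcal D_w^{0,\alpha}$ — i.e. $\omega_\phi$ and $\omega_0$ are uniformly equivalent with $C^{0,\alpha}$-close coefficients. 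One more round of differentiating the equation, now for $g_{i\bar j,k}$, yields the third derivatives of the potential in $\mathcal D_w^{0,\alpha}$ as well.

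Finally, I would assemble the curvature. The Riemann tensor of $\omega_\phi$ is a universal expression $\Rm(\omega_\phi) = \Rm(\omega_0) + (\text{algebraic combination of } g_\phi^{-1}, \p g_\phi, \p\bar\p g_\phi)$; since $\Rm(\omega_0)$ is $C^{0,\alpha}$ in the edge sense by Step~1, and the edge-Hessian of $\phi$ (hence $\p\bar\p g_\phi$, a third derivative of $\phi$) lies in $\mathcal D_w^{0,\alpha}$ by Step~2, while $g_\phi^{-1}$ and the first derivatives are even better, the norm $\|\Rm(\omega_\phi)\|_{\omega_\phi}$ — being a contraction of $\Rm(\omega_\phi)$ against $g_\phi^{-1}$, itself in $\mathcal D_w^{0,\alpha}$ and bounded away from zero — is $\alpha$-Hölder continuous on $M$ (it is automatically so across $M\setminus D$ by interior elliptic regularity, and the content is the uniform estimate up to and along $D$). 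I expect the main obstacle to be Step~2, and within it the passage from a Laplacian inequality to two-sided $\mathcal D_w^{0,\alpha}$ bounds on the \emph{individual} Hessian components in the edge calculus: the mixed component $\p_{z_n}\p_{\bar z_j}\phi$ and the tangential-normal cross terms are exactly the ones that generically fail to be Hölder in naive coordinates, and keeping track of the precise weight $2\beta-2$ so that the threshold $\alpha < \beta^{-1}-1$ is not violated requires care — this is presumably where the hypothesis $k \ge 2\beta-1$ and $u$ constant along $D$ are used most delicately, ensuring no extra singular terms of order worse than $|z_n|^{2\beta-2}$ appear in $\p\bar\p g_\phi$. The rest is bookkeeping with the edge Schauder estimates already available from \cite{jmr} and \cite{Donaldson}.
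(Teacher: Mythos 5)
Your overall architecture --- localize near $D$, pass to the wedge chart $w_n=z_n^{\beta}$, show the metric coefficients of $\omega_\phi$ in the $w$-coordinates have H\"older first and second derivatives, and read off the curvature from formula \eqref{curvature} --- matches the paper, as does your intuition that the hypothesis $(\omega+i\partial\bar\partial u)^n=|s|^{2k}F\omega^n$ with $k\ge 2\beta-1$ is there to tame the background. But the concrete engine you propose for the crucial step is not the right one and does not close. The Chern--Lu inequality $\Delta_{\omega_\phi}(\tr_{\omega_0}\omega_\phi)\ge\dots$ plus a maximum principle is an \emph{a priori $C^0$ estimate on the metric}: here uniform equivalence of $\omega_\phi$ and $\omega_0$ is already part of the hypothesis ($\phi\in\mathcal D_w^{0,\alpha}$ and $\omega_\phi$ conic), so this step buys nothing; more importantly, Schauder theory cannot be applied to a differential \emph{inequality}, and even if you had $\tr_{\omega_0}\omega_\phi\in\mathcal D_w^{0,\alpha}$, the trace together with the determinant does not control the individual components $g_{i\bar j}$ for $n\ge 2$. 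You flag this yourself as the ``main obstacle,'' and it is exactly the point where your argument stops being a proof.

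The paper's resolution is different and is worth recording. One takes $\log$ of $\det(\partial^2(h+\phi)/\partial w_\mu\partial\bar w_\nu)=e^f\det(\partial^2 h/\partial w_\mu\partial\bar w_\nu)$ and differentiates with respect to the \emph{wedge} coordinate $w_\gamma$, obtaining the linear conic Poisson equation $\Delta_\phi\bigl(\partial_{w_\gamma}(h+\phi)\bigr)=\partial_{w_\gamma}f+\Delta_0\bigl(\partial_{w_\gamma}h\bigr)$; the whole problem then reduces to two things you do not address. First, one must show the background term $\Delta_0(\partial_{w_\gamma}h)$ is of class $C_w^{0,\alpha}$: this is where the exponential vanishing of $(\omega+i\partial\bar\partial u)^n$ enters, via the expansion $|s|^{2-2\beta}\omega_0^n/\omega^n=|s|^{2(k+1-\beta)}F+\sum_j a_j|s|^{2j\beta}$, whose logarithm lies in $\mathcal D_w^{0,\alpha}$ precisely because $k+1-\beta\ge\beta$ kills the dangerous $|s|^{2-2\beta}$ summand. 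Second, one must verify that $\partial_{w_\gamma}(h+\phi)$ is a genuine $W^{1,2}$ weak solution across $D$ before invoking the Donaldson/JMR Schauder estimate (Theorem \ref{donaldson-regularity}); this is not automatic and is supplied by the polyhomogeneous-expansion result (Proposition \ref{corolphexp}, resting on the Brendle-type Lemma \ref{pointwise-secondderivative-estimate}), an ingredient entirely absent from your outline. With those two points in place one gets $\partial_{w_\gamma}(h+\phi)\in\mathcal D_w^{0,\alpha}$, differentiates once more in $\bar w_\delta$ and repeats, and the curvature assembly in your final step then goes through as you describe.
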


Of course $D$ sits inside both $M$ and in the restriction of $L$ to $D$, which will be denoted from now on by $L|_D$.
In Section $3$ we then show that under the hypothesis on the existence of biholomorphisms of neighborhoods of $D$ (which in our paper will be referred to as the {\em{Grauert condition}})  in $M$ and in  $L|_D$, a conical background metric with all the required properties does exist:

\begin{teor}\label{degen}
Let $(M,\omega)$ be a $n$-dimensional compact K\"ahler manifold, and let $(L,h)$ be the line bundle on $M$ associated to a smooth divisor $D$ equipped with a Hermitian metric.  Assume there exist $U \subset M$ and $V \subset L|_D$ neighborhoods of $D$ and a biholomorphism $\Upsilon : U \to V$ extending the identity on $D$.
Then there exist a defining section $s \in H^0(M,L)$ of $D$ and a smooth real function $u$ on $M$ such that:
\renewcommand{\theenumi}{$(\roman{enumi})$}
\renewcommand{\labelenumi}{\theenumi}
\begin{enumerate}
\item \label{item::omegaD}
$u$ is constant along $D$, \\
\item \label{item::vanishingD}
the form $(\omega + i\partial\bar\partial u)^n$ vanishes exponentially along $D$, that is $|s|^{-2k} (\omega + i\partial\bar\partial u)^n$ is positive on $M \setminus D$ and extends to a smooth form on $M$ for any $k>0$. \\
\item \label{item::conic}
for any $\beta \in (0,1)$ the form
$\omega + i \partial \bar \partial (u + |s|^{2\beta})$ defines a conic K\"ahler metric of angle $2\pi\beta$.
\end{enumerate}
\end{teor}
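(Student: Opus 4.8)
The plan is to obtain $u$ as a global $i\partial\bar\partial$-potential, relative to $\omega$, of a closed semipositive $(1,1)$-form $\Theta$ on $M$ in the class $[\omega]$ whose $n$-th power vanishes to infinite order along $D$ and which, near $D$, is modelled on the pull-back of $\omega|_D$ by the fibration furnished by the Grauert condition. First, use $\Upsilon$ to identify a tubular neighbourhood $U$ of $D$ with a neighbourhood $V$ of the zero section in the total space of $L|_D$, with bundle projection $\pi\colon V\to D$; set $\omega_D:=\omega|_D$ and let $\rho\colon V\to[0,\infty)$ be the squared fibre norm for $h|_D$, so that $\rho$ vanishes to second order along the zero section. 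Take the classical flat function, a smooth $g\colon[0,\infty)\to[0,\infty)$ with $g(0)=0$ and $g'(t)=e^{-1/t}$ for $t>0$, and set $\Lambda(t)=\int_0^t \tau^{-1}g(\tau)\,d\tau$; then $\Lambda$ is smooth, vanishes to infinite order at $0$, and $\bigl(t\Lambda'(t)\bigr)'=g'(t)$. A direct computation in the model shows that the closed $(1,1)$-form
\[
\Theta_{\mathrm{loc}}\ :=\ \pi^*\omega_D+i\partial\bar\partial\Lambda(\rho)
\]
on $V$ is $\ge 0$, is $>0$ off the zero section, has its unique small eigenvalue --- of size comparable to $e^{-1/\rho}$ --- transverse to $D$, and hence has $\Theta_{\mathrm{loc}}^{\,n}$ equal to a positive smooth function times $e^{-1/\rho}$ times the coordinate volume form, so that $\Theta_{\mathrm{loc}}^{\,n}$ vanishes to infinite order along $D$; moreover $\Theta_{\mathrm{loc}}|_D=\omega_D$ and $[\Theta_{\mathrm{loc}}]=[\omega]$ in $H^2(V,\RR)\cong H^2(D,\RR)$.

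Since $\Theta_{\mathrm{loc}}$ and $\omega$ are cohomologous on $U$, a cutoff argument --- which requires some care, see below --- produces a global closed $(1,1)$-form $\Theta$ on $M$ with $[\Theta]=[\omega]$, $\Theta=\Theta_{\mathrm{loc}}$ on a smaller neighbourhood of $D$, $\Theta=\omega$ outside $U$, $\Theta\ge0$ on $M$ and $\Theta>0$ on $M\setminus D$. By the $\partial\bar\partial$-lemma on the compact Kähler manifold $M$ the $d$-exact $(1,1)$-form $\Theta-\omega$ equals $i\partial\bar\partial u$ for some $u\in C^\infty(M,\RR)$; this is the required function, and $\omega+i\partial\bar\partial u=\Theta$.

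Now verify the three conclusions. For (i): $(i\partial\bar\partial u)|_D=\Theta|_D-\omega|_D=\omega_D-\omega_D=0$, so $u|_D$ is pluriharmonic on the compact $D$, hence constant. For (ii): choose $s\in H^0(M,L)$ cutting out $D$ and rescaled by a small constant $\lambda>0$ (this changes neither the divisor nor anything below); near $D$ one has $|s|^2=\rho\cdot(\text{a positive smooth function})$, while $\Theta^n=\Theta_{\mathrm{loc}}^{\,n}$ vanishes to infinite order in $\rho$, so $|s|^{-2k}\Theta^n$ extends smoothly by $0$ across $D$ for every $k>0$ and is positive on $M\setminus D$ because $\Theta$ is. For (iii): near $D$, $\omega+i\partial\bar\partial\bigl(u+|s|^{2\beta}\bigr)=\Theta_{\mathrm{loc}}+i\partial\bar\partial|s|^{2\beta}$, and the conical term $i\partial\bar\partial|s|^{2\beta}$ --- whose leading behaviour transverse to $D$ is $\beta^2|s|^{2(\beta-1)}\,i\partial|s|^2\wedge\bar\partial|s|^2$ up to lower order --- blows up and dominates the bounded form $\Theta_{\mathrm{loc}}$ in that direction, yielding the standard cone of angle $2\pi\beta$, while $\pi^*\omega_D$ governs the directions along $D$; away from $D$, having taken $\lambda$ so small that $i\partial\bar\partial|s|^{2\beta}$ is small on compacta of $M\setminus D$ uniformly in $\beta\in(0,1)$, the sum stays positive since $\Theta>0$ there. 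Hence $\omega+i\partial\bar\partial\bigl(u+|s|^{2\beta}\bigr)$ is a conic Kähler metric of angle $2\pi\beta$ for every $\beta\in(0,1)$.

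The main obstacle is the construction of $\Theta$, that is, the gluing in the second paragraph: since $\Theta_{\mathrm{loc}}$ is only barely semipositive near $D$ (one eigenvalue of size $e^{-1/\rho}$), the terms arising from differentiating the cutoff are a priori of order one on the transition annulus and threaten semipositivity. To control them one should choose the Grauert neighbourhood compatibly with $\omega$, so that the local $i\partial\bar\partial$-potential of $\Theta_{\mathrm{loc}}-\omega$ vanishes to second order along $D$, and then play the cutoff off against the infinite-order smallness carried by $\Lambda$ and the blow-up of $i\partial\bar\partial|s|^{2\beta}$; this is where the real work lies, whereas the model computation, the $\partial\bar\partial$-lemma and the conical estimates are routine.
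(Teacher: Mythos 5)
Your local model near $D$ is essentially the paper's: identify a tubular neighbourhood of $D$ with a neighbourhood of the zero section of $L|_D$ via $\Upsilon$, and add to $p^*\omega|_D$ the $i\partial\bar\partial$ of a function of the fibre norm that is flat at $0$ (the paper uses $e^{-1/h}$ where your $\Lambda(\rho)$ appears), so that the top power of the resulting form vanishes to infinite order along $D$; the discrepancy between $(\Upsilon^{-1})^*\omega$ and $p^*\omega|_D$ is absorbed by the $\partial\bar\partial$-lemma on $V$ exactly as you do. The problem is the step you yourself flag as ``where the real work lies'': you never actually construct the global form $\Theta$. A cutoff applied to the \emph{forms} (or to their local potentials multiplied by a bump function) produces first-order terms of size $O(1)$ on the transition annulus, where the transverse eigenvalue of $\Theta_{\mathrm{loc}}$ is only of size $e^{-1/\rho}$, and your suggested remedies (choosing the Grauert chart ``compatibly with $\omega$'', playing the cutoff against the flatness of $\Lambda$) are not carried out and do not obviously work. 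As written, the proof has a genuine gap precisely at its central point.

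The paper closes this gap with a different device: it glues \emph{potentials}, not forms, using Demailly's regularized maximum $M_\eta$. Concretely, it sets $\tilde u = \Upsilon^*(e^{-1/h}-v)$ near $D$ and $q = \eta^{-2}+\eta\log|s|^2$ globally, and defines $u = M_\eta(\tilde u, q)$ on the transition region. Since $q \to -\infty$ along $D$ and exceeds $\tilde u$ by more than $\eta+\eta^{-1}$ on the outer boundary of the annulus, $M_\eta$ selects $\tilde u$ near $D$ and $q$ outside, with a smooth interpolation in between; convexity of $M_\eta$ gives $i\partial\bar\partial M_\eta(\tilde u,q) \geq \sum_i \frac{\partial M_\eta}{\partial t_i} \, i\partial\bar\partial(\cdot)$, so no uncontrolled cutoff-derivative terms appear and the only loss is the term $-\eta\,\frac{\partial M_\eta}{\partial t_2}\Theta$ coming from $i\partial\bar\partial q = -\eta\Theta$, which is absorbed by $\omega$ for small $\eta$. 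You would need to either adopt this regularized-maximum argument or supply a genuinely quantitative version of your cutoff; without one of these the construction of $u$ is incomplete. (Your verifications of (i)--(iii), granted the existence of $u$, are in line with the paper's, though for (iii) the paper makes the positivity quantitative via $i\partial\bar\partial|s|^{2\beta} \geq -\beta|s|^{2\beta}\Theta$ and a rescaling of $s$ with $\beta|s|^{2\beta}$ bounded uniformly in $\beta$, which you should also record.)
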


The relevance of the reference metric $\omega + i \partial \bar \partial u$ rests on the exponential vanishing of its volume form along $D$. Indeed this is crucial for making the ratio $|s|^{2-2\beta} (\omega + i \partial \bar \partial (u+|s|^{2\beta}))^n/\omega^n$ of class $\mathcal D_w^{0,\alpha}$. In general this is not the case mainly for the presence of a summand of order $|s|^{2-2\beta}$ (note that the function $|s|^{2-2\beta}$ is not of class $\mathcal D_w^{0,\alpha}$ whenever $\beta>1/2$). 

The existence of smooth divisors satisfying the Grauert condition is a classical problem in complex geometry and clearly imposes a strong restriction and it is likely to be 
not the optimal one (in fact S. Donaldson in a private communication proposed the slightly weaker condition of {\em{splitting}} as the
optimal one to get boundedness of the curvature). At the end of the paper we briefly collect some of the known examples of 
Grauert divisors.

{\bf{Acknowledgements:}}
We wish to thank S. Donaldson for having shared some of his ideas on this problem with the first author.
The first author was partially supported by the FIRB Project ``Geometria Differenziale Complessa e Dinamica Olomorfa'',  while the second author was  supported by the Marie Curie IOF grant 255579 (CAMEGEST).

\section{Preliminaries on conic K\"ahler metrics}

Mainly to fix notation let us start by recalling some definitions.
First of all we consider some classes of functions which are relevant for defining and studing conic K\"ahler metrics.
Such classes have been introduced by Donaldson \cite{Donaldson} and have been extensively studied by Jeffres, Mazzeo and Rubinstein \cite{jmr}.

Fix $\beta \in (0,1)$ and let $K$ be the set of all positive integers less than $2 + 2 \beta$.
Given $k \in K$, consider on $\mathbf C^n$ the local change of coordinates $z=\psi_k(w)$, where
\begin{equation}
\psi_k (w_1,\dots, w_n) = (w_1,\dots , w_{n-1} , w_n^{1/\beta})
\end{equation}
is defined on the set $\Omega_k$ consisting of points $w$ satisfying
$\left| \arg(w_n) - \frac{k \beta \pi}{1+\beta} \right| < \frac{\beta \pi}{1+\beta}$
and $|w_n| >0$.
Note that $\psi_k$ is a biholomorphism onto its image, and that the images of $\psi_k$ as $k$ runs in $K$ cover $\mathbf C^n \setminus \{z_n = 0\}$.
Let $\alpha \in (0,1)$, and let $B \subset \mathbf C^n$ be a bounded open neighborhood of the origin.
A continuous function $f \in C^0(\overline B)$ is of class $C_w^{0,\alpha}$ if $\psi_k^*f$ is H\"older continuous of class $C^\alpha(\overline {B_k})$ for all $k \in K$, where $B_k = \psi_k^{-1}(\psi_k(\Omega_k) \cap B)$.
Moreover the function $f$ is of class $\mathcal D_w^{0,\alpha}$ if in addition for any $k \in K$, one has
\begin{equation}
\partial \bar \partial \psi_k^*f
= \sum_{i,j} f_{i j} dw_i \wedge d \bar w_j,
\end{equation}
where the functions $f_{ij}$ are in $C^\alpha(\overline B_k)$ and $f_{in},f_{nj}$ vanish at $w_n=0$. 
Note that $C_w^{0,\alpha}$ and $\mathcal D_w^{0,\alpha}$ are denoted by $\mathcal C^{,\alpha,\beta}$ and $\mathcal C^{2,\alpha,\beta}$ respectively by Donaldson \cite{Donaldson} (cf. also Rubsinstein \cite{yanir}).

\begin{remar}
The original definition of Donaldson's of the class $\mathcal C^{2,\alpha,\beta}$ involves a ~1-form $\epsilon$ which is only locally defined, precisely as the map $\psi_k$. In our notation defining $\epsilon$ via the equation $\psi_k^* \epsilon = \frac{\bar w_n}{|w_n|} d w_n$, the equivalence between the two definitions follows by observing that a H\"older continuous function which vanishes at $w_n=0$ stay such after multiplying by $\bar w_n / |w_n|$.
For convenience of the reader we include here a proof of this fact, which implies both inclusions of the two function spaces applied in one case as stated, and in the second case replacing $\frac{w_n}{|w_n|}$ with $\frac{\bar w_n}{|w_n|}$.
\end{remar}

\begin{lemma}
Let $\Omega \subset \mathbf C^n \times \mathbf C$ be some open neighborhood of the origin.
Let $f \in C^{\alpha}(\overline \Omega)$ for some $0<\alpha<1$ and assume that $f(z,0) = 0$ for all $(z,0) \in \Omega$.
Then the function $g$ defined by 
$$g(z,w) = \left\{ 
\begin{array}{ll}
f(z,w) \frac{w}{|w|} & \mbox{ if } w \neq 0 \\
0 & \mbox{ if } w=0
\end{array}\right.$$
belongs to $C^{\alpha}(\overline \Omega)$.
\end{lemma}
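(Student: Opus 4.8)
The plan is to prove that multiplication by the unit complex number $\frac{w}{|w|}$ preserves $\alpha$-Hölder continuity, provided the multiplicand vanishes on the locus $w=0$. The key point is that the factor $\frac{w}{|w|}$ is bounded (modulus one away from $w=0$) but is \emph{not} continuous at $w=0$; however its discontinuity is exactly compensated by the vanishing of $f$ there. First I would fix two points $p_1 = (z_1,w_1)$ and $p_2 = (z_2,w_2)$ in $\overline\Omega$ and seek a bound of the form $|g(p_1) - g(p_2)| \le C |p_1 - p_2|^\alpha$, where $|p_1-p_2|$ denotes the Euclidean distance and $C$ depends only on the Hölder constant and sup norm of $f$.

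The argument splits into cases according to the geometry of $w_1, w_2$. If both $w_1 = w_2 = 0$ then $g(p_1) = g(p_2) = 0$ and there is nothing to prove, so assume $|w_1| \ge |w_2|$ with $w_1 \neq 0$. Write
\begin{equation*}
g(p_1) - g(p_2) = \bigl( f(p_1) - f(p_2) \bigr)\frac{w_1}{|w_1|} + f(p_2)\left( \frac{w_1}{|w_1|} - \frac{w_2}{|w_2|} \right),
\end{equation*}
with the convention that the last term is interpreted as $f(p_2)\frac{w_1}{|w_1|}$ when $w_2 = 0$ (consistent since then $f(p_2)=f(z_2,0)=0$, so that term vanishes anyway). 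The first summand is immediately bounded by $\|f\|_{C^\alpha}|p_1-p_2|^\alpha$ since $|w_1/|w_1|| = 1$. For the second summand I would estimate $|f(p_2)|$ using the vanishing hypothesis: $|f(p_2)| = |f(z_2,w_2) - f(z_2,0)| \le \|f\|_{C^\alpha}|w_2|^\alpha$. It then remains to control $|w_2|^\alpha \bigl| \frac{w_1}{|w_1|} - \frac{w_2}{|w_2|} \bigr|$, and here I would use the elementary inequality $\bigl| \frac{w_1}{|w_1|} - \frac{w_2}{|w_2|} \bigr| \le \frac{2|w_1 - w_2|}{|w_2|}$ valid whenever $|w_1| \ge |w_2| > 0$ (which follows from $\bigl| \frac{a}{|a|} - \frac{b}{|b|} \bigr| \le \frac{|a-b|}{|a|} + \frac{\bigl||a|-|b|\bigr|}{|b|} \cdot \frac{|a|}{|a|} \le \frac{2|a-b|}{\min(|a|,|b|)}$ after a short manipulation). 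Thus $|w_2|^\alpha \bigl| \frac{w_1}{|w_1|} - \frac{w_2}{|w_2|} \bigr| \le 2 |w_2|^{\alpha-1}|w_1 - w_2| = 2|w_2|^{\alpha - 1}|w_1-w_2|^{1-\alpha}|w_1-w_2|^\alpha$, and since $|w_1 - w_2| \le |w_1| + |w_2| \le 2|w_1|$... wait, I actually need $|w_1-w_2| \le 2|w_2|$, which need not hold; so instead I bound $|w_2|^{\alpha-1}|w_1-w_2|^{1-\alpha} \le 1$ precisely when $|w_1 - w_2| \le |w_2|$, and otherwise handle it separately.

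This suggests the cleaner dichotomy that I would actually adopt: Case (a), $|w_1 - w_2| \le \frac12 |w_1|$; Case (b), $|w_1 - w_2| > \frac12|w_1|$. In Case (a), since $|w_1| \ge |w_2|$ one gets $|w_2| \ge |w_1| - |w_1-w_2| \ge \frac12|w_1| \ge |w_1-w_2|$, so $|w_2|^{\alpha-1}|w_1-w_2|^{1-\alpha} \le 1$ and the second summand is $\le 2\|f\|_{C^\alpha}|w_1-w_2|^\alpha \le 2\|f\|_{C^\alpha}|p_1-p_2|^\alpha$. In Case (b), I bound $g$ crudely: $|g(p_1)-g(p_2)| \le |f(p_1)| + |f(p_2)| \le \|f\|_{C^\alpha}(|w_1|^\alpha + |w_2|^\alpha) \le 2\|f\|_{C^\alpha}|w_1|^\alpha \le 2\|f\|_{C^\alpha}(2|w_1-w_2|)^\alpha \le 2^{1+\alpha}\|f\|_{C^\alpha}|p_1-p_2|^\alpha$, again using the vanishing of $f$ at $w=0$ and the case assumption $|w_1| < 2|w_1-w_2|$. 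Combining all cases gives the claim with an explicit constant like $C = 4\|f\|_{C^\alpha}$. The only genuinely delicate point is the case analysis around the singular locus $w=0$: one must make sure the vanishing of $f$ is invoked in exactly the regime where $\frac{w}{|w|}$ misbehaves, namely when the two points straddle or sit near $\{w=0\}$; the estimate on $\bigl|\frac{w_1}{|w_1|} - \frac{w_2}{|w_2|}\bigr|$ degrades like $1/|w_2|$, and it is the factor $|w_2|^\alpha$ coming from $|f(p_2)|$ that absorbs it up to the remaining $|w_1-w_2|^\alpha$. Everything else is routine. I expect no serious obstacle; the proof is a self-contained two-variable Hölder estimate.
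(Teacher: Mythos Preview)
Your argument is correct. Both you and the paper start from the same decomposition
\[
g(p_1)-g(p_2)=\bigl(f(p_1)-f(p_2)\bigr)\tfrac{w_1}{|w_1|}+f(p_2)\Bigl(\tfrac{w_1}{|w_1|}-\tfrac{w_2}{|w_2|}\Bigr)
\]
and use the vanishing hypothesis to get $|f(p_2)|\le\|f\|_{C^\alpha}|w_2|^\alpha$. The only difference lies in how the angular factor is controlled. The paper parametrizes $(z',w')=(z,w)+\varepsilon(a,b)$ and reduces the second summand to the uniform boundedness of the single auxiliary function $\varphi(x)=|x-1|^{-2\alpha}\bigl|\tfrac{x}{|x|}-1\bigr|^2$ on $\mathbf C\setminus\{0,1\}$, which is then checked by a polar-coordinate computation yielding $\varphi\le 4$. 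Your near/far dichotomy $|w_1-w_2|\lessgtr\tfrac12|w_1|$, combined with the elementary inequality $\bigl|\tfrac{w_1}{|w_1|}-\tfrac{w_2}{|w_2|}\bigr|\le 2|w_1-w_2|/\min(|w_1|,|w_2|)$ in the near case and a crude triangle bound in the far case, accomplishes the same thing without introducing $\varphi$. Your route is arguably more transparent and avoids the polar-coordinate calculation, at the mild cost of a case split; the paper's route is case-free but needs the auxiliary lemma. Both give a H\"older constant of order $3$--$4$ times $\|f\|_{C^\alpha}$.
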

\begin{proof}
H\"older continuity of $f$ yields $\left| g(z',w) - g(z,w) \right| \leq C |z'-z|^\alpha$ for all $(z',w),(z,w) \in \Omega$, and $\left| g(z',w') - g(z,0) \right| \leq C |(z',w')-(z,0)|^\alpha$ for all $(z',w'),(z,0) \in \Omega$.
On the other hand, for all $(z,w),(z',w') \in \Omega$ with $w'\neq w$ and $w'w \neq 0$ one has
\begin{eqnarray*}
\left| g(z',w') - g(z,w) \right|
&=& \left| g(z',w') - f(z,w) \frac{w'}{|w'|} + f(z,w) \frac{w'}{|w'|} - g(z,w) \right| \\
&\leq& \left| g(z',w') - f(z,w) \frac{w'}{|w'|} \right| + \left| f(z,w) \frac{w'}{|w'|} - g(z,w) \right| \\
&\leq& \left| f(z',w') - f(z,w) \right| + |f(z,w)| \left| \frac{w'}{|w'|} - \frac{w}{|w|} \right|.
\end{eqnarray*}
Let $\varepsilon > 0$ and $(a,b)$ in the unit sphere of $\mathbf C^n \times \mathbf C$ such that $$(z',w') = (z,w) + \varepsilon (a,b).$$
Note that by assumptions $\varepsilon b$ and $w + \varepsilon b$ are nonzero. 
By H\"older continuity of $f$ one has
$$ \left| f(z',w') - f(z,w) \right| \leq C \varepsilon^\alpha. $$
For the same reason and the fact that $f(z,0)=0$ one also has
$$ |f(z,w)| \leq C |w|^\alpha. $$
Therefore
\begin{eqnarray*}
\left| g(z',w') - g(z,w) \right|
&\leq& C \varepsilon^\alpha + C |w|^\alpha \left| \frac{1 + \varepsilon b/w}{|1 + \varepsilon b/w|} - 1\right| \\
&=& C \varepsilon^\alpha + C \varepsilon^\alpha |b|^{\alpha} |\varepsilon b/w|^{-\alpha} \left| \frac{1 + \varepsilon b/w}{|1 + \varepsilon b/w|} - 1\right| \\
&\leq& C \varepsilon^\alpha \left( 1 + |\varepsilon b/w|^{-\alpha} \left| \frac{1 + \varepsilon b/w}{|1 + \varepsilon b/w|} - 1\right| \right).
\end{eqnarray*}
Now consider the function $\varphi$ on $\mathbf C \setminus \{0,1\}$ defined by
$$ \varphi (x) = |x-1|^{-2\alpha} \left| \frac{x}{|x|} -1  \right|^2,$$
so that one has
\begin{equation}\label{redtophi}
\left| g(z',w') - g(z,w) \right|
\leq C \varepsilon^\alpha \left( 1 + \sqrt{\varphi(1 + \varepsilon b/w)} \right).
\end{equation}
Since $\varepsilon = |(z',w') - (z,w)|$, H\"older continuity of $g$ will follow by showing that $\varphi$ is bounded.
To this end, passing to polar coordinates one has
\begin{eqnarray*}
\varphi(r e^{it})
&=& \frac{|e^{it}-1|^2}{|re^{it}-1|^{2\alpha}} \\
&=& \frac{2-2\cos t}{(r^2 - 2r \cos t + 1)^{\alpha}}
\end{eqnarray*}
whence it is clear that $\varphi(r e^{it}) \leq 4$ whenever $\cos t \leq 0$.
On the other hand, the lower bound $r^2 - 2r \cos t + 1 \geq 1 - (\cos t)^2$ together with the assumption $\cos t > 0$ give the bound $\varphi(r e^{it}) \leq 2 (1-\cos t)^{1-\alpha}/(1+\cos t)^{\alpha} \leq 2$.
Summarizing, we proved that $\varphi \leq 4$.
Substituting in \eqref{redtophi} then gives
\begin{equation}
\left| g(z',w') - g(z,w) \right|
\leq 3C |(z',w') - (z,w)|^\alpha,
\end{equation}
and the proof is complete.
\end{proof}

The definition of classes $C_w^{0,\alpha}$ and $\mathcal D_w^{0,\alpha}$ recalled above can be extended to the case of a pair $(M,D)$ where $M$ is a complex manifold and $D \subset M$ is a smooth divisor.
Here a continuous function $f$ on $M$ belongs to $C_w^{0,\alpha}$ if it is in $C^{\alpha}$ far from $D$, and for any choice of local complex coordinates $(z_1,\dots,z_n)$ centered at some point of $D$ such that $D$ is locally defined by $z_n=0$, the local expression of $f$ is in $C_w^{0,\alpha}$ according to definition above.
Analogously, $f$ is in $\mathcal D_w^{0,\alpha}$ if it is in $C^{2,\alpha}$ away from $D$, and locally around $D$ it is in $\mathcal D_w^{0,\alpha}$ in the sense defined above.

\begin{defin} 
A conic K\"ahler metric on $M$ of angle $2 \pi \beta$ along the smooth divisor $D$ is a K\"ahler current $\omega_c$
with local potential in $\mathcal D_w^{0,\alpha}$, and such that for any point $p\in D$, there are local coordinates $(z_1,\dots,z_n)$ centered at $p$ such that $D$ is locally defined by $z_n = 0$ and $\omega_c$ is uniformally equivalent to the model metric
$$ \omega_\beta = \sum_{k=1}^{n-1} i dz_k \wedge d\bar z_k + \beta^2 |z_n|^{2\beta-2} idz_n\wedge d\bar z_n. $$
\end{defin}

Now let $\omega$ be a smooth K\"ahler metric on $M$, and let $L$ be the line bundle associated to $D$ equipped with an arbitrary hermitian metric $h$.
As one can easily check, any defining section $s \in H^0(M,L)$ of $D$ can be rescaled so that $\omega + i \partial \bar \partial |s|^{2\beta}$ is a conic \K\ metrics of angle $2\pi\beta$ along $D$.

The interest in classes $C_w^{0,\alpha}$ and $\mathcal D_w^{0,\alpha}$ is motivated by regularity theory for the Poisson equation associated to a conic metric (\cite[Lemma 2.3]{jmr}).
The main case is $ \Delta_\beta v = f$, where
\begin{equation}
\Delta_\beta =\sum_{k=1}^{n-1} \frac{\partial^2}{\partial z_k \partial \bar z_k} + \beta^{-2} |z_n|^{2-2\beta} \frac{\partial^2}{\partial z_n \partial \bar z_n}
\end{equation}
is the Laplacian associated with the model metric $\omega_\beta$.
The following fundamental fact is due to Donaldson.

\begin{prop}\cite[Corollary 1 and Section 4.2]{Donaldson}  \label{weak-solution}
Let $B \subset \mathbf C^n$ be the unit ball centered at the origin, and let $D \subset B$ be the divisor defined by $z_n=0$.
Let $f$ be a function in $C_w^{0,\alpha}$ on $B$ for some $\alpha,\beta \in (0,1)$ such that $\alpha < \beta^{-1}-1$.
Then any weak solution $v$ of the Poisson equation $\Delta_\beta v = f$ is in $\mathcal D_w^{0,\alpha}$ on any ball $B'$ compactly contained in $B$.
\end{prop}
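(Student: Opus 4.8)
The plan is to reduce the statement, by means of the coordinate changes $\psi_k$ and a cutoff, to the construction of a well-behaved Green's operator for the model Laplacian $\Delta_\beta$ on $\mathbf C^{n-1}\times C_\beta$, where $C_\beta$ denotes the flat $2$-cone $\big(\mathbf C,\ \beta^{2}|z_n|^{2\beta-2}\,i\,dz_n\wedge d\bar z_n\big)$. Two easy preliminary reductions come first: away from $D$ the operator $\Delta_\beta$ is uniformly elliptic on compact subsets of $\{z_n\neq0\}$, so classical interior elliptic regularity already gives $v\in C^{2,\alpha}_{\mathrm{loc}}(B\setminus D)$; and a weak solution in the energy space of $\omega_\beta$ is locally bounded (Moser iteration adapted to the degenerate weight, or a posteriori from the representation below). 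It then suffices to (a) produce, on a ball slightly larger than $B'$, a function $v_0\in\mathcal D_w^{0,\alpha}$ with $\Delta_\beta v_0=f$, and (b) verify that the $\Delta_\beta$-harmonic function $v-v_0$ lies in $\mathcal D_w^{0,\alpha}(B')$.

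For (a) I would take $v_0=Pf$, with $P$ convolution against the fundamental solution of $\Delta_\beta$ on $\mathbf C^{n-1}\times C_\beta$ — explicit by separation of variables, its dependence on $z_n$ carried by the $2$-cone Green's function — applied after replacing $f$ by $\chi f$ for a cutoff $\chi$ equal to $1$ on $\overline{B'}$ and supported in $B$. Via the branchwise isometry $w_n=z_n^{\beta}$, which identifies $C_\beta$ with the flat sector $\{0\le\arg w_n<2\pi\beta\}$ with its two edges glued, the cone Green's function is given by the classical angular-mode series, with modes $e^{\pm i m\psi/\beta}$ ($m\ge0$, $\psi=\arg w_n$) and radial weights $|w_n|^{m/\beta}$. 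On each sector $\Omega_k$ the pullback $\psi_k^*(Pf)$ is then an iterated convolution of $\psi_k^*f\in C^{\alpha}(\overline{B_k})$ against the transported product kernel: the $z'$-factor gains two classical H\"older derivatives (by the Calder\'on--Zygmund and Schauder estimates), while for the cone factor one uses that a single $w_n$- or $\bar w_n$-derivative of the potential produces an overall factor vanishing on $\{w_n=0\}$ and that the second derivative returns to $C^\alpha$. This is precisely the structure — $C^\alpha$ coefficients, with the $dw_i\wedge d\bar w_n$ and $dw_n\wedge d\bar w_j$ coefficients ($i,j<n$) vanishing on $\{w_n=0\}$ — that defines $\mathcal D_w^{0,\alpha}$.

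The hypothesis $\alpha<\beta^{-1}-1$ is forced, already within step (a), by the ``linear in $z_n$'' part of $f$. Write $f$ near $D$ as $f(z',0)$ plus a part of the form $z_n g(z')+\bar z_n\,\tilde g(z')$ plus a remainder $r$ that vanishes at least like $|z_n|^{\alpha\beta}$ away from the first two. The inversion of $\Delta_\beta$ proceeds mode by mode, using that $\Delta_\beta\big(Q(z')|z_n|^{2\gamma}e^{i s\arg z_n}\big)$ equals a constant multiple of $Q(z')|z_n|^{2\gamma-2\beta}e^{i s\arg z_n}$ plus $(\Delta_{z'}Q)|z_n|^{2\gamma}e^{i s\arg z_n}$, together with a Neumann iteration in increasing powers of $|z_n|$ that converges, the isolated resonances $\gamma^2=s^2/4$ yielding at worst a logarithmic factor which under the present hypothesis still lies in $\mathcal D_w^{0,\alpha}$. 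The slice part $f(z',0)$ is absorbed by a $C^{2,\alpha}$ function of $z'$ alone, the remainder gives a contribution visibly of class $\mathcal D_w^{0,\alpha}$, but the linear part is inverted by $h(z')z_n+\tilde h(z')\bar z_n$ with $\Delta_{z'}h=g$ and $\Delta_{z'}\tilde h=\tilde g$, and $\partial\bar\partial(h z_n)$ has $dw_n\wedge d\bar w_j$-coefficient $\beta^{-1}(\partial_{\bar w_j}h)\,w_n^{1/\beta-1}$ for $j<n$; this vanishes on $\{w_n=0\}$ for every $\beta\in(0,1)$, but it is $\alpha$-H\"older there precisely when $1/\beta-1\ge\alpha$. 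Since modes of this type are admitted by $C_w^{0,\alpha}$, the restriction cannot be dispensed with.

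For (b), a Fourier analysis in $\arg z_n$ of the homogeneous equation shows that any bounded $\Delta_\beta$-harmonic function admits near $D$ a convergent expansion into terms $R(z')\,|z_n|^{|\ell|+2p\beta}e^{i\ell\arg z_n}$ with $R$ smooth in $z'$, and each such term lies in $\mathcal D_w^{0,\alpha}$ — the worst cases ($|\ell|=1$, $p=0$, namely $R(z')z_n$ and $R(z')\bar z_n$) again requiring exactly $1/\beta-1\ge\alpha$. Assembling over the finitely many $k\in K$ (whose images cover $B'\setminus D$) with uniform constants, and combining (a) and (b), gives $v=Pf+(\text{harmonic part})\in\mathcal D_w^{0,\alpha}(B')$ together with the estimate $\|v\|_{\mathcal D_w^{0,\alpha}(B')}\le C\big(\|f\|_{C_w^{0,\alpha}(B)}+\|v\|_{C^0(B)}\big)$; in particular $Pf$ is continuous, which retroactively justifies treating $v-Pf$ as a genuine $\Delta_\beta$-harmonic function (using also that $D$ has zero capacity, so a bounded weakly $\Delta_\beta$-harmonic function is strongly so). The hard part is the kernel estimate for $P$ above: proving, uniformly in $k$ and up to $\{w_n=0\}$, both the vanishing of the mixed second $w$-derivatives of $\psi_k^*(Pf)$ and their $\alpha$-H\"older continuity. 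This rests on a careful analysis of the near-diagonal singularity of the cone Green's function and of the convergence of its angular series under the non-smooth substitution $z_n=w_n^{1/\beta}$, and it is exactly there that the bound $\alpha<\beta^{-1}-1$ — equivalently $(1+\alpha)\beta<1$ — is both necessary and sufficient.
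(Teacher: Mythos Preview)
The paper does not prove this proposition: it is stated with an explicit citation to Donaldson \cite[Corollary~1 and Section~4.2]{Donaldson} and is invoked as a black box thereafter. So there is no ``paper's own proof'' to compare against; your sketch is in effect a summary of Donaldson's argument (Green's operator for the model operator on $\mathbf C^{n-1}\times C_\beta$, mode-by-mode analysis on the cone, identification of the threshold $\alpha<\beta^{-1}-1$ via the $z_n$-linear terms), and as such the overall strategy is correct.

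That said, what you have written is a plan, not a proof. You yourself flag the ``hard part'' --- the uniform $C^\alpha$ control of the mixed second $w$-derivatives of $\psi_k^*(Pf)$ up to $\{w_n=0\}$ --- and then do not carry it out; this is precisely the content of Donaldson's analysis and cannot be waved through. A couple of smaller points: your expansion of bounded $\Delta_\beta$-harmonic functions as sums of $R(z')\,|z_n|^{|\ell|+2p\beta}e^{i\ell\arg z_n}$ is not quite the right set of indicial exponents (on the cone alone the bounded harmonic modes are $z_n^m,\bar z_n^m$; the extra $2p\beta$ shifts only enter once you couple with $\Delta_{z'}$ through a formal Neumann series, and that coupling needs to be justified, not asserted); and the appeal to ``$D$ has zero capacity'' to upgrade a bounded weak $\Delta_\beta$-harmonic function to a strong one deserves a line of argument, since capacity here is with respect to the singular weight. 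None of this is fatal to the approach, but as written the proposal would not stand on its own without importing Donaldson's estimates.
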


As hinted by Donaldson and proved by Jeffres, Mazzeo and Rubinstein \cite[Proposition 3.3]{jmr}
, the previous result extends to general conic K\"ahler metrics as follows, using the definition of conic (edge) laplacian as in equation $(14)$ of \cite{jmr}.

\begin{teor}\label{donaldson-regularity}
Let $B \subset \mathbf C^n$ be the unit ball centered at the origin, and let $D \subset B$ be the divisor defined by $z_n=0$.
Let $\omega_c$ be a conic K\"ahler metric on $B$ with local potential in $\mathcal D_w^{0,\alpha}$ for some $\alpha,\beta \in (0,1)$ such that $\alpha < \beta^{-1}-1$.
Let $f$ be a function in $C_w^{0,\alpha}$ on $B$, and let $v$ be a $W^{1,2}$ weak solution of the equation $\Delta_c v = f$.
Then $v$ is in $\mathcal D_w^{0,\alpha}$ on any ball $B'$ compactly contained in $B$.
\end{teor}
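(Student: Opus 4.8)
The plan is to reduce the statement about a general conic Laplacian $\Delta_c$ to the model case $\Delta_\beta$ already settled by Donaldson in Proposition \ref{weak-solution}, and to recover regularity by a standard elliptic bootstrap localised near $D$. First I would observe that away from $D$ the operator $\Delta_c$ is uniformly elliptic with H\"older coefficients, so interior Schauder estimates immediately give $v \in C^{2,\alpha}$ on compact subsets of $B \setminus D$; the only issue is a neighborhood of $D$. There, since $\omega_c$ has local potential in $\mathcal D_w^{0,\alpha}$ and is by definition uniformly equivalent to the model metric $\omega_\beta$, one writes $\Delta_c = \Delta_\beta + E$, where the error operator $E$ collects the differences of the metric coefficients; the key point is that these coefficients differ from those of $\omega_\beta$ by functions that, after pulling back by $\psi_k$, lie in $C^\alpha(\overline{B_k})$ and — crucially, from the $\mathcal D_w^{0,\alpha}$ condition on the potential — have the mixed $in$ and $nj$ components vanishing at $w_n = 0$. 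This is exactly the structure needed so that $E$, applied to a function in $\mathcal D_w^{0,\alpha}$, produces a function in $C_w^{0,\alpha}$.

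Next I would set up the iteration. Rewrite the equation as $\Delta_\beta v = f - Ev =: \tilde f$. A priori we only know $v \in W^{1,2}$, so I would first gain enough regularity to make sense of $Ev$. For this I would use the De Giorgi–Nash–Moser machinery adapted to the model edge geometry (available in Donaldson \cite{Donaldson}), which applied to the divergence-form equation $\Delta_c v = f$ with $f \in C^0$ yields $v \in C^{0,\gamma}_w$ for some small $\gamma > 0$, and in particular $v$ bounded. Then I would pass to the linearised elliptic theory on the pulled-back coordinate charts $\psi_k$: on each $\Omega_k$ the operator $\psi_k^* \Delta_\beta$ is, up to the factor $\beta^{-2}|w_n|^{2-2\beta}$ in the $w_n$-direction, a genuinely uniformly elliptic operator with smooth coefficients, and Donaldson's Corollary 1 (quoted as Proposition \ref{weak-solution}) gives the a priori bound $\|v\|_{\mathcal D_w^{0,\alpha}(B')} \le C(\|\tilde f\|_{C_w^{0,\alpha}(B)} + \|v\|_{C^0(B)})$. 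The point of the error term $Ev$ vanishing at $w_n=0$ in the mixed directions is that it does not destroy the $C_w^{0,\alpha}$ class: combining the just-obtained boundedness of $v$ with the $\mathcal D_w^{0,\alpha}$ structure of the metric coefficients shows $Ev \in C^0_w$, and then once we have $v \in C^{1,\gamma}$-type control we can close up to get $Ev \in C_w^{0,\alpha}$, hence $\tilde f \in C_w^{0,\alpha}$, hence by Proposition \ref{weak-solution} again $v \in \mathcal D_w^{0,\alpha}$ on a smaller ball.

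I expect the main obstacle to be controlling the error operator $E$ carefully enough: one must verify that the difference between the coefficients of $\Delta_c$ and of $\Delta_\beta$, in the singular $w_n$-direction, is $o(|w_n|^{2-2\beta})$ with the right H\"older-type modulus, so that when divided by the degenerate weight it still produces something in $C_w^{0,\alpha}$ rather than merely bounded. This is where the precise definition of $\mathcal D_w^{0,\alpha}$ — the vanishing of $f_{in}$, $f_{nj}$ at $w_n=0$ — is used essentially, and it is also where the restriction $\alpha < \beta^{-1} - 1$ enters, since it is exactly the threshold guaranteeing that $|w_n|^{2-2\beta}$-type weights do not obstruct the $C^\alpha$ regularity after the change of variables $\psi_k$. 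The bookkeeping across the finitely many overlapping charts $\{\Omega_k : k \in K\}$ and the matching of the local solutions into a global statement on $B'$ is routine once the single-chart estimate is in place; I would refer to \cite[Proposition 3.3]{jmr} for the details of that patching, since the conic (edge) Laplacian they use is set up precisely for this purpose.
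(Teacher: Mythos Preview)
The paper does not give its own proof of this statement at all: Theorem \ref{donaldson-regularity} is simply quoted as a result ``hinted by Donaldson and proved by Jeffres, Mazzeo and Rubinstein \cite[Proposition 3.3]{jmr}'', with no argument supplied beyond that citation. Your sketch --- perturb $\Delta_c$ off the model $\Delta_\beta$, gain initial regularity, then bootstrap via Proposition \ref{weak-solution} --- is a reasonable outline of the kind of argument one expects (and indeed is in the spirit of what \cite{jmr} actually do), so in that sense you have done more than the paper itself; but there is no ``paper's own proof'' to compare against, and your proposal ultimately also defers the technical details to \cite[Proposition 3.3]{jmr}.
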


The behavior near the divisor of a function in $\mathcal D_w^{0,\alpha}$ can be conveniently described in terms of polyhomogeneous expansions as shown by Jeffres, Mazzeo and Rubinstein \cite{jmr}.
Although the next Proposition \ref{corolphexp} might be deduced from their theory, we believe it is of some interest to include here a direct proof relying on standard facts.

We remark that a key role in proving Proposition \ref{corolphexp} (which in turn will be used in the proof of Theorem \ref{boundedcurvatureintro}) is played by Lemma \ref{pointwise-secondderivative-estimate}, which extends to the case $\beta \in(0,1)$ an auxiliary result proved by Brendle \cite[Proposition A.1]{br} assuming $\beta < 1/2$.

\begin{prop}\label{corolphexp}
Fix real numbers $\alpha,\beta \in (0,1)$ such that $\alpha < \beta^{-1} - 1$.
Let $B \subset \mathbf C^n$ be the unit ball, and let $D \subset B$ be the divisor defined by $z_n=0$.
Let $f$ be a function of class $C_w^{0,\alpha}$ on B, and let $v$ be a bounded function of class $C^2$ on $B \setminus D$. 
Suppose that $v$ and $f$ satisfy 
\begin{equation}\label{maineqlocn}
\Delta_\beta v = f
\end{equation}
on $B \setminus D$.
Then there exist functions $a,b$ of class $C^\alpha$ on $B$, constant along D and such that the (complex valued) function $V = v - a|z_n|^{2\beta} - bz_n$ satisfies
$$ |z_n|^{2-2\beta} \left | \frac{\partial^2 V}{\partial z_n^2} \right|\leq C |z_n|^{\alpha' \beta},
\qquad |z_n|^{2-2\beta} \left | \frac{\partial^2 V}{\partial z_n^2} +\frac{1-\beta}{z_n} \frac{\partial V}{\partial z_n} \right | \leq C |z_n|^{\alpha' \beta}$$
for any $\alpha' < \alpha$ on  the complement of $D$ in the ball $|z|<1/6$.
\end{prop}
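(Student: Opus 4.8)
\emph{Proof sketch.} The plan is to recognise $a$ and $b$ as the coefficients of the two ``obstructing'' monomials $|z_n|^{2\beta}$ and $z_n$ in the expansion of $v$ along $D$, and then to obtain the two inequalities in the resolving coordinate $w_n=z_n^\beta$ of the maps $\psi_k$, where everything reduces to interior Schauder estimates for the flat Laplacian.

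First I would expand $v$ in Fourier modes in the angle $\theta=\arg z_n$; single-valuedness forces only integer modes $v=\sum_{m\in\mathbb{Z}}v^{(m)}(z',r)e^{im\theta}$, and isolating the $z_n$-direction in $\Delta_\beta v=f$ each mode solves an ODE of the form $\partial_r^2v^{(m)}+r^{-1}\partial_rv^{(m)}-m^2r^{-2}v^{(m)}=c_\beta\, r^{2\beta-2}g^{(m)}$ (with $c_\beta>0$ a constant), where $g:=f-\Delta'v$ and $\Delta'$ is the Laplacian in the directions tangent to $D$; here $g\in C_w^{0,\alpha}$ once one knows that $v$ is $C^{2,\alpha}$ in the $z_1,\dots,z_{n-1}$ directions, which comes from interior estimates in those (constant-coefficient) directions. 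The indicial roots are $\pm|m|$ (and $0$, $\log$ for $m=0$), so boundedness of $v$ kills the growing branch and, using $g^{(0)}(z',r)=g(z',0)+O(r^{\alpha\beta})$ and $g^{(m)}(z',r)=O(r^{\alpha\beta})$ for $m\neq0$, variation of parameters gives
$$\psi_k^*v=v(z',0)+a(z')|w_n|^2+b(z')w_n^{1/\beta}+\overline{b(z')}\,\bar w_n^{1/\beta}+\rho(z',w_n),\qquad \rho=O\bigl(|w_n|^{2+\alpha}\bigr),$$
where $|z_n|^{2\beta}=|w_n|^2$, $z_n=w_n^{1/\beta}$, $a(z')$ is a fixed multiple of $g(z',0)$, and (since $v$ is real) the $m=-1$ mode supplies the anti-holomorphic term; tangential regularity of $v$ propagates to $a,b$, which are thus of class $C^\alpha$ along $D$ and constant in the $z_n$-direction. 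The hypotheses $\alpha<1$ and $\alpha<\beta^{-1}-1$ enter precisely in the remainder bound $\rho=O(|w_n|^{2+\alpha})$: they guarantee that the only single-valued monomials $z_n^p\bar z_n^q$ with $w_n$-order $(p+q)/\beta\le 2+\alpha$ are $1$, $|z_n|^{2\beta}$, $z_n$ and $\bar z_n$, so that all other terms of the polyhomogeneous expansion of $v$ — in particular the harmonic modes $z_n^m,\bar z_n^m$ for $|m|\ge2$ and the genuine remainder coming from the ODEs — sit inside $\rho$.

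Now set $V:=v-a(z')|z_n|^{2\beta}-b(z')z_n$, so that $\psi_k^*V=v(z',0)+\overline{b(z')}\,\bar w_n^{1/\beta}+\rho$. From $z_n=w_n^{1/\beta}$ the chain rule gives the identities $\partial_{z_n}^2+\frac{1-\beta}{z_n}\partial_{z_n}=\beta^2w_n^{2-2/\beta}\partial_{w_n}^2$ and $\partial_{z_n}^2=\beta^2w_n^{2-2/\beta}\partial_{w_n}^2-\beta(1-\beta)w_n^{1-2/\beta}\partial_{w_n}$; multiplying by $|z_n|^{2-2\beta}=|w_n|^{2/\beta-2}$ one obtains
$$|z_n|^{2-2\beta}\Bigl|\partial_{z_n}^2V+\tfrac{1-\beta}{z_n}\partial_{z_n}V\Bigr|=\beta^2\bigl|\partial_{w_n}^2(\psi_k^*V)\bigr|,\qquad |z_n|^{2-2\beta}\bigl|\partial_{z_n}^2V\bigr|\le\beta^2\bigl|\partial_{w_n}^2(\psi_k^*V)\bigr|+\beta(1-\beta)|w_n|^{-1}\bigl|\partial_{w_n}(\psi_k^*V)\bigr|.$$
The terms $v(z',0)$ and $\overline{b(z')}\,\bar w_n^{1/\beta}$ are killed by $\partial_{w_n}$, so only $\rho$ contributes; applying interior Schauder for the flat Laplacian to $\rho$ (at scale $\sim|w_n|$, in whichever sector $\psi_k(\Omega_k)$ keeps $w_n$ away from its edges), using that $\rho=O(|w_n|^{2+\alpha})$ and that $\Delta_\beta V$ — hence the flat Laplacian of $\rho$ — is controlled near $D$, gives $|\partial_{w_n}\rho|=O(|w_n|^{1+\alpha})$ and $|\partial_{w_n}^2\rho|=O(|w_n|^{\alpha})$. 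Plugging into the two displayed identities makes $|z_n|^{2-2\beta}|\partial_{z_n}^2V+\frac{1-\beta}{z_n}\partial_{z_n}V|$ and $|z_n|^{2-2\beta}|\partial_{z_n}^2V|$ both $O(|w_n|^{\alpha})=O(|z_n|^{\alpha\beta})\le C|z_n|^{\alpha'\beta}$ on $|z|<1/6$, for every $\alpha'<\alpha$, which are the two asserted estimates. This last step is the content of Lemma~\ref{pointwise-secondderivative-estimate}, whose point for $\beta\ge1/2$ is that $b\,z_n=b\,w_n^{1/\beta}$ must be removed first because $w_n^{1/\beta}$ fails to be $C^2$ there; for $\beta<1/2$ that term already belongs to $\rho$, which is why Brendle could dispense with it.

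\emph{Main obstacle.} I expect the delicate part to be the construction of this expansion together with the regularity of $a,b$ in the $z'$-directions: one has to run the mode analysis with $z'$-dependent coefficients, show that $a,b$ inherit $C^{\alpha}$ (in fact $C^{2,\alpha}$) tangential regularity by differentiating the equation along $D$, and make precise that after subtracting $a|z_n|^{2\beta}+b\,z_n$ the error $\rho$ solves a flat equation with right-hand side good enough for the interior Schauder step — equivalently, pin down which terms of the polyhomogeneous expansion of $v$ really obstruct the estimates. The remaining ingredients — the computations $\Delta_\beta|z_n|^{2\beta}=1$ and $\Delta_\beta z_n=0$ (no cross terms, since $a,b$ are constant along $D$), the two chain-rule identities above, and the standard interior Schauder estimate for the flat Laplacian — are routine.
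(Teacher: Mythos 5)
Your route is genuinely different from the paper's. You reconstruct the full polyhomogeneous expansion of $v$ by Fourier modes in $\arg z_n$ and an indicial-root/variation-of-parameters analysis, then read off the estimates from interior Schauder bounds on the remainder in the uniformizing coordinate $w_n=z_n^\beta$. This is essentially the Jeffres--Mazzeo--Rubinstein edge-calculus picture, which the paper explicitly sets aside in favour of ``a direct proof relying on standard facts'': the paper freezes $z'$, works fiberwise in the single variable $z_n$ (Lemma \ref{pointwise-secondderivative-estimate}), subtracts $a|z_n|^{2\beta}$ with $a=\beta^{-2}\tilde f(0)$ to get a flat Poisson equation $\partial^2F/\partial z\partial\bar z=h$ with $|h|\le C|z|^{2\beta-2+\alpha'\beta}$, produces $b$ from $L^p$ elliptic regularity plus Sobolev embedding ($F\in C^{1,\gamma}$ when $\bar p=2/(2-2\beta-\alpha'\beta)>2$, and $b=0$ otherwise), and finishes with scaled interior estimates on discs of radius $|z|/2$. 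Your approach buys a more complete structural picture of $v$ near $D$; the paper's buys a short self-contained argument that never needs the mode sum to converge and never differentiates $a,b$ tangentially.

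That said, the sketch has three concrete gaps. First, the tangential input $g=f-\Delta'v\in C_w^{0,\alpha}$ is not obtainable from ``interior estimates in the constant-coefficient directions'': $v$ is only assumed bounded and $C^2$ off $D$, and uniform H\"older control of $\Delta'v$ up to $D$ in the $w$-coordinates is exactly Donaldson's conic Schauder estimate (Proposition \ref{weak-solution}), which must be invoked, as the paper does via equation \eqref{splitlaplacian}. Second, the $C^\alpha$ regularity of $b$ along $D$ is asserted but not proved; since $b$ is a trace of $\partial_{z_n}(v-a|z_n|^{2\beta})$ on $D$, its tangential H\"older continuity does not follow formally from that of $v$, and the paper needs a separate argument (the auxiliary function $\varphi$, the bound \eqref{holderphi} and a contradiction) to establish it. Third, because $a,b$ are only $C^\alpha$ in $z'$, your remainder $\rho$ does not solve a flat Poisson equation in all the $w$-variables (its full Laplacian would involve $\Delta'a$, $\Delta'b$), so the ``interior Schauder at scale $|w_n|$'' step must be run in the $w_n$-disc alone with $z'$ frozen --- which is precisely the fiberwise reduction the paper performs; as stated, applying Schauder in the full $w$-ball would fail. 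A smaller point: at the resonance $\alpha\beta=1-2\beta$ (i.e.\ $2+\alpha=1/\beta$) the mode-$\pm1$ particular solution acquires a logarithm and $\rho=O(|w_n|^{2+\alpha})$ breaks down; this is why both the statement and the paper's dichotomy only deliver the exponent $\alpha'<\alpha$, and your sketch should not claim the borderline $\alpha$ itself.
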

Note that $|z_n|^{2-2\beta} \left | \frac{\partial^2 V}{\partial z_n^2} +\frac{1-\beta}{z_n} \frac{\partial V}{\partial z_n} \right |$ is the absolute value of the derivative of $V$ with respect to $z_n^{\beta}$.
\begin{proof}

As already noticed the key point if the following
\begin{lemma}\label{pointwise-secondderivative-estimate}
Fix real numbers $\alpha,\beta \in (0,1)$ such that $\alpha < \beta^{-1}-1$. Let $\tilde{f}$ be a function of class $C^\alpha$ on the unit disc $B \subset \mathbf C$, and let $v$ be a bounded function of class $C^2$ on the punctured disc $B \setminus \{0\}$. 
Suppose that $v$ and $\tilde f$ satisfy 
\begin{equation}\label{maineqloc}
|z|^{2-2\beta} \frac{\partial^2 v}{\partial z \partial \bar{z}} = \tilde{f}(|z|^{\beta-1} z)
\end{equation}
on $B \setminus \{0\}$.
Then there exist $a,b \in \mathbf C$ such that for any $\alpha' < \alpha$ the function $V = v - a|z|^{2\beta} - bz$ satisfies
$$ |z|^{2-2\beta} \left | \frac{\partial^2 V}{\partial z^2} \right|\leq C |z|^{\alpha' \beta},
\qquad |z|^{2-2\beta} \left | \frac{\partial^2 V}{\partial z^2} +\frac{1-\beta}{z} \frac{\partial V}{\partial z} \right | \leq C |z|^{\alpha' \beta}$$
on the punctured disc $0<|z|<1/6$.
\end{lemma}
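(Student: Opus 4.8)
The plan is to first normalize the equation so that it becomes a Poisson equation with a source vanishing on $D=\{0\}$, then to split $v$ into a smooth harmonic part, the two lowest nontrivial angular Fourier modes, and a Newtonian potential, and finally to estimate the two "bad" second derivatives on each piece separately. Since $\partial_z\partial_{\bar z}|z|^{2\beta}=\beta^2|z|^{2\beta-2}$, taking $a:=\tilde f(0)/\beta^2$ turns \eqref{maineqloc} on $B\setminus\{0\}$ into $\partial_z\partial_{\bar z}(v-a|z|^{2\beta})=H$, with $H(z):=|z|^{2\beta-2}\bigl(\tilde f(|z|^{\beta-1}z)-\tilde f(0)\bigr)$. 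Because $\bigl|\,|z|^{\beta-1}z\,\bigr|=|z|^{\beta}$ and $\tilde f\in C^\alpha$, one records the two estimates that drive everything: $|H(z)|\le C|z|^{(2+\alpha)\beta-2}$ on $B\setminus\{0\}$, and the dyadic H\"older bound $[H]_{C^\alpha(\{\rho\le|z|\le 2\rho\})}\le C\rho^{(2+\alpha)\beta-2-\alpha}$; the same bounds survive for the function $H'$ obtained from $H$ by deleting its $m=\pm1$ angular Fourier modes $H_{\pm1}$, and the remaining angular modes of $H'$ satisfy $|\widehat{H'}_m(\rho)|\le C\rho^{(2+\alpha)\beta-2}|m|^{-\alpha}$. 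One then writes, on $\{|z|<1/3\}$, $v-a|z|^{2\beta}=h+P_1+P_{-1}+G$, where: $P_{\pm1}=p_{\pm1}(\rho)e^{\pm i\theta}$ solves $\partial_z\partial_{\bar z}P_{\pm1}=H_{\pm1}$, with $p_{\pm1}$ the decaying solution of the corresponding Euler-type ODE, so that $|p_{\pm1}(\rho)|+\rho|p_{\pm1}'(\rho)|+\rho^2|p_{\pm1}''(\rho)|\le C\rho^{(2+\alpha)\beta}$ (up to a logarithmic factor at the resonant value $\alpha=\beta^{-1}-2$, which occurs only when $\beta<1/2$ and is absorbed into the loss $\alpha'<\alpha$); $G$ is the Newtonian potential of $4\chi H'$ for a radial cutoff $\chi\equiv1$ on $\{|z|<1/3\}$ and supported in $\{|z|<1/2\}$ (so $G$ too has vanishing $m=\pm1$ modes); and $h:=v-a|z|^{2\beta}-P_1-P_{-1}-G$ is harmonic on the punctured disc and bounded near $0$, hence extends harmonically, and so smoothly, across $0$, with $\|h\|_{C^2(\{|z|\le1/6\})}$ controlled by $\|v\|_\infty$. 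Finally one sets $b$ to be the coefficient of $z$ in the Taylor expansion of $h$, so that $V=v-a|z|^{2\beta}-bz=h'+P_1+P_{-1}+G$ on $\{|z|<1/6\}$, where $h':=h-bz$ is smooth, harmonic, and has no linear term in $z$.

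It then remains to estimate $\partial_z^2$ and $\partial_z^2+\tfrac{1-\beta}{z}\partial_z$ on the four summands. For $h'$ both operators produce bounded functions, so their contribution to $|z|^{2-2\beta}(\cdot)$ is $O(|z|^{2-2\beta})\le C|z|^{\alpha'\beta}$, since $\alpha'<\alpha<\beta^{-1}-1$ forces $\alpha'\beta<2-2\beta$; here it is crucial that $h'$ has no $z$-term, because otherwise $\tfrac{1-\beta}{z}\partial_z(cz)=c(1-\beta)/z$ would contribute a term of size $|z|^{1-2\beta}$, which blows up for $\beta>1/2$ --- this is the only essential new point compared with the case $\beta<1/2$ of Brendle, and the reason for subtracting $bz$. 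For $P_{\pm1}$ one writes $\partial_z^2$ and $\partial_z^2+\tfrac{1-\beta}{z}\partial_z$ applied to $p_{\pm1}(\rho)e^{\pm i\theta}$ explicitly in polar coordinates, substitutes the ODE bounds above, and gets $|z|^{2-2\beta}$ times them bounded by $C|z|^{\alpha\beta}$ (again up to the harmless log at resonance); in fact a differential identity reduces $\partial_z^2P_1$ to a constant multiple of $|z|^{2\beta-2}\widehat{H}_1(|z|)e^{-i\theta}$, which is directly $O(|z|^{(2+\alpha)\beta-2})$. For $G$ one uses $\partial_zG(z)=\tfrac1\pi\int\tfrac{\chi H'(\zeta)}{z-\zeta}\,dA(\zeta)$ and $\partial_z^2G(z)=-\tfrac1\pi\int\tfrac{\chi H'(\zeta)}{(z-\zeta)^2}\,dA(\zeta)$ (plus a smooth harmonic error, from $\chi\not\equiv1$, negligible on $\{|z|<1/6\}$), fixes $z_0$ with $|z_0|=\rho_0<1/6$, and splits the $\zeta$-integral over $|\zeta|<\rho_0/2$, $\rho_0/2\le|\zeta|\le2\rho_0$, and $|\zeta|>2\rho_0$ (with $\chi\equiv1$ on the first two). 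For $\partial_z^2G$ the pointwise bound on $H'$ handles the first and third regions directly, while on the middle region one subtracts $H'(z_0)$ and uses the dyadic H\"older bound together with Stokes' theorem for the principal-value integral of the leftover kernel; this gives $|\partial_z^2G(z_0)|\le C\rho_0^{(2+\alpha)\beta-2}$. For $\partial_zG$ the first and third regions are again handled directly, giving $O(\rho_0^{(2+\alpha)\beta-1})$, and on the middle region one expands $\tfrac1{z_0-\zeta}=-\sum_{k\ge0}z_0^{k}\zeta^{-k-1}$: the $k=0$ term is a multiple of $\int\chi\,\widehat{H'}_1\,dr$ and \emph{vanishes} precisely because the $m=\pm1$ modes were removed --- this is the crux, since that term would otherwise be $O(1)$, so that $\tfrac{1-\beta}{z_0}\partial_zG$ would blow up like $\rho_0^{1-2\beta}$ for $\beta>1/2$ --- whereas each $k\ge1$ term carries a factor $z_0^k$ beating the $\zeta^{-(k+1)}$ growth and, summed against the decay $|m|^{-\alpha}$ of $\widehat{H'}_m$ together with the extra power of $r$ in the radial integration, gives a convergent series of total size $O(\rho_0^{(2+\alpha)\beta-1})$. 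Thus $|\partial_zG(z_0)|\le C\rho_0^{(2+\alpha)\beta-1}$, hence $\bigl|\tfrac{1-\beta}{z_0}\partial_zG(z_0)\bigr|\le C\rho_0^{(2+\alpha)\beta-2}$, and both $|\partial_z^2G|$ and $\bigl|\partial_z^2G+\tfrac{1-\beta}{z}\partial_zG\bigr|$ are $\le C|z|^{(2+\alpha)\beta-2}$. Adding the four contributions and using $|z|^{2-2\beta}|z|^{(2+\alpha)\beta-2}=|z|^{\alpha\beta}\le|z|^{\alpha'\beta}$ yields the two stated bounds on $\{0<|z|<1/6\}$.

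The step I expect to be the main obstacle is exactly the middle region in the analysis of $G$: one cannot argue mode by mode, because for $\alpha<1$ the angular Fourier series of $\partial_z^2v$ need not converge absolutely, so the crude mode-wise bounds sum to $+\infty$. The device that resolves it is to peel off the $m=\pm1$ modes of the source and solve them by ODE, keeping the remainder as an honest Newtonian potential whose Beurling-type transforms are then controlled by region splitting; the cancellation of the $k=0$ term after this peeling is what makes the estimate $|\partial_zG(z_0)|\le C\rho_0^{(2+\alpha)\beta-1}$ --- and therefore the second inequality of the Lemma --- hold uniformly for every $\beta\in(0,1)$, not only for $\beta<1/2$.
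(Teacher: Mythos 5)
Your strategy is genuinely different from the paper's, and in my judgment it is viable. The paper argues much more softly: after the same normalization $F=v-a|z|^{2\beta}$ with $a=\beta^{-2}\tilde f(0)$ and the same pointwise bound $|h|\le C|z|^{2\beta-2+\alpha'\beta}$ on the source, it (i) bounds $\partial F/\partial z$ via Green's representation, (ii) observes $h\in L^p$ for $p<\bar p=2/(2-2\beta-\alpha'\beta)$ so that $F\in W^{2,p}$, (iii) extracts the constant $b$ by a dichotomy --- $b=0$ when $\alpha'\beta<1-2\beta$, and $b=\partial F/\partial z(0)$ via Sobolev embedding $W^{2,p}\hookrightarrow C^{1,\gamma}$ when $\alpha'\beta>1-2\beta$ (so $\bar p>2$), with $\gamma<\alpha'\beta+2\beta-1$ --- and then (iv) applies rescaled interior Schauder estimates on Whitney discs $B(z,|z|/2)$, using the dyadic H\"older bound on $h$ (the same one you derive). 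Your route replaces (ii)--(iv) by an explicit decomposition into a harmonic part, the $m=\pm1$ angular modes solved by ODE, and a Newtonian potential, with $b$ read off from the Taylor expansion of the harmonic part. What the paper's argument buys is brevity and no Fourier analysis; what yours buys is a sharper structural picture (it essentially reproves the polyhomogeneous expansion by hand) and it correctly isolates the single real obstruction for $\beta>1/2$, namely the $O(1)$ constant hidden in $\partial_z F$ near the origin, which both proofs remove by subtracting $bz$.

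There is, however, one concrete error in your treatment of $\partial_z G$ that must be repaired. You expand $\frac{1}{z_0-\zeta}=-\sum_{k\ge0}z_0^k\zeta^{-k-1}$ ``on the middle region'' $\rho_0/2\le|\zeta|\le2\rho_0$; there this series diverges (for $|\zeta|<|z_0|$ the ratio $|z_0/\zeta|$ exceeds $1$), and in any case no cancellation is needed there, since the kernel is locally integrable and the direct bound $\int_{|z_0-\zeta|\le3\rho_0}|z_0-\zeta|^{-1}|H'|\,dA\le C\rho_0^{(2+\alpha)\beta-2}\cdot\rho_0$ already gives the claim. Conversely, your claim that the \emph{outer} region $|\zeta|>2\rho_0$ is ``handled directly, giving $O(\rho_0^{(2+\alpha)\beta-1})$'' fails exactly when $(2+\alpha)\beta>1$ (e.g.\ $\beta$ close to $1$): there $\int_{2\rho_0}^{1/2}r^{(2+\alpha)\beta-2}\,dr=O(1)$, not $O(\rho_0^{(2+\alpha)\beta-1})$. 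The expansion, the vanishing of the $k=0$ term $\int\chi\,\widehat{H'}_1\,dr$ coming from the removal of the $m=\pm1$ modes, and the geometric summation of the $k\ge1$ terms (which converges there since $|z_0/\zeta|\le1/2$) all belong to this outer region. Once the two regions swap roles the argument closes; without that swap, the step as written is false in precisely the regime $\beta>1/2$ that the lemma is designed to cover.
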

\begin{proof}
Let $a = \beta^{-2} \tilde f (0)$ and let
$$F(z) = v(z) - a |z|^{2\beta}$$
and 
$$h(z) = |z|^{2\beta-2} \left(\tilde f(|z|^{\beta-1} z) - \tilde f(0)\right).$$
Since $\tilde{f}$ is $\alpha$-H\"older continuous, there is a constant $C$ such that for all $\alpha'<\alpha$ one has
\begin{equation}\label{estimateh}
|h(z)| \leq C |z|^{2\beta-2+\alpha'\beta}.
\end{equation}

Substituting $F$ and $h$ in \eqref{maineqloc} gives
\begin{equation} \label{modified}
\frac{\partial^2 F}{\partial z \partial \bar{z}} = h.
\end{equation}
Since $F$ is $C^2$ on the punctured disc, one gets a bound on the first derivative of $F$ from Green's representation formula and a standard cutoff argument. More precisely, assuming $0<|z_0|<1/4$, one has
\begin{eqnarray}
\left| \frac{\partial F}{\partial z} (z_0) \right|
&\leq& C \int_{|z|<1/2} |z-z_0|^{-1} |h(z)| + C \nonumber \\
&\leq& C \int_{|z|<1/2} |z-z_0|^{-1} |z|^{2\beta-2+\alpha'\beta} + C \nonumber \\
&\leq& C |z_0|^{2\beta-1+\alpha'\beta} + C  \label{1stderest}
\end{eqnarray}

Thus $F$ turns out to be of class $W^{1,2}$ on $B_{1/4}$.
Moreover, it follows readily from \eqref{estimateh} that $h$ is of class $L^p$ for all $p< \bar p$, where $\bar p = 2/(2-2\beta - \alpha'\beta)$.
Therefore, $F$ is a weak solution of the Poisson equation \eqref{modified} and $L^p$ elliptic regularity \cite[Theorem 10.2.2, p. 276]{Jost} applies, so that $F$ is indeed of class $W^{2,p}$ on $B_{1/4}$ for all $p< \bar p$.

\begin{claim}
There exists $b \in \mathbf C$ such that
$|z_0|^{1-2\beta} \left | \frac{\partial F}{\partial z}(z_0) - b \right | \leq C  |z_0|^{\alpha'\beta}$ whenever $0 < |z_0| < 1/4$.
\end{claim}

First of all note that by the arbitrariness of $\alpha'$ one can suppose without loss of generality that $\alpha'\beta \neq 1-2\beta$.
We therefore have the following dichotomy: either $\alpha'\beta < 1 - 2\beta$ or $\alpha'\beta > 1 - 2\beta$.

If $\alpha'\beta < 1 - 2\beta$ then inequality \eqref{1stderest} holds with no additive constant, thus the claim is trivially verified with $b=0$.

On the othe hand, if $\alpha'\beta > 1 - 2\beta$ then $\bar p > 2$.
In this case by Sobolev embedding Theorem, the function $F$ turns out to be of class $C^{1,\gamma}$ on $\bar B_{1/4}$ for all $0 < \gamma < 1 - 2/\bar p$.
Let $b = \frac{\partial F}{\partial z} (0)$, so that by $\gamma$-H\"older continuity one has:
$$ \left | \frac{\partial F}{\partial z}(z_0) - b \right | \leq C  |z_0|^\gamma. $$
Substituting the expression of $\bar p$ in the bound for $\gamma$ gives that inequality above holds for any $\gamma < \alpha'\beta+2\beta-1$.
Thus the claim is proved thanks to arbitrariness of $\alpha'$ we started with.  
\medskip

Let $V=F-bz$ with $b$ given by the claim above. Clearly $V$ satisfies the Poisson equation
\begin{equation}\label{modifiedV}
\frac{\partial^2 V}{\partial z \bar \partial z} = h
\end{equation}
Let $0<|z| < 1/6$, and let $B'$ be the disc centered at $z$ with radius $|z|/2$.
Applying interior elliptic estimates to \eqref{modifiedV} on the disc $B'$ yields
\begin{equation} \label{2ndordest}
\left| \frac{\partial^2 V}{\partial z^2} \right| 
\leq C |z|^{-1} \sup_{B'} \left| \frac{\partial V}{\partial z} \right|
+ C \sup_{B'} |h| + C |z|^{\alpha'} [h]_{C^{\alpha'}(B')}. 
\end{equation}
Since $\tilde f$ is $\alpha'$-H\"older continuous, for all $x,y \in B'$ one has
\begin{eqnarray*} 
|h(x) - h(y)| 
&\leq& \left| |x|^{2\beta-2} (\tilde f(|x|^{\beta-1} x) - \tilde f(0)) - |y|^{2\beta-2} (\tilde f(|y|^{\beta-1} y) - \tilde f(0)) \right| \\
&\leq& C \left| |x|^{2\beta-2}-|y|^{2\beta-2}\right| |x|^{\alpha'\beta} 
+ C |y|^{2\beta-2} \left||x|^{\beta-1}x - |y|^{\beta-1}y\right|^{\alpha'} \\
&\leq& C |z|^{2\beta-2+\alpha'\beta}
\end{eqnarray*}

whence
$$ [h]_{C^{\alpha'}(B')}\leq C |z|^{2\beta-2+\alpha'(\beta-1)}.$$
Substituting this in \eqref{2ndordest} together with the information given by the claim above yields
$$|z_0|^{2-2\beta} \left | \frac{\partial^2 V}{\partial z^2} \right | \leq C |z|^{\alpha'\beta} \qquad \mbox{and} \qquad  |z|^{2-2\beta} \left | \frac{1}{z} \frac{\partial V}{\partial z} \right | \leq C |z|^{\alpha'\beta}$$
whence the thesis follows.
\end{proof}

Now we are in a position to conclude the proof of Proposition \ref{corolphexp}.
To this end, let $\tilde f$ be the continuous function on $B$ defined by $f(z) = \tilde f(z_1,\dots,z_{n-1},|z_n|^{\beta-1} z_n)$.
Since $f$ is in $C_w^{0,\alpha}$, the function $\tilde f$ is in $C^\alpha$. 
To see that, note that the pull-back $\psi_k^* f$ can be written as $\psi_k^* f(w) = \tilde f(w_1,\dots,w_{n-1}, |w_n|^{1-1/\beta} w_n^{1/\beta})$. Since $|w_n|^{1-1/\beta} w_n^{1/\beta}$ is a bi-Lipschitz function of $w_n$ and $\psi_k^* f(w)$ is in $C^\alpha$ by definition of $C_w^{0,\alpha}$, it follows that $\tilde f$ is of class $C^\alpha$, as claimed.

Denoted by $\Delta_e = \sum_{i=1}^{n-1} \frac{\partial^2}{\partial z_i \partial \bar z_i}$ the Euclidean laplacian on $\mathbf C^{n-1}$, equation \eqref{maineqlocn} can be written in the form
\begin{equation}\label{splitlaplacian}
|z_n|^{2-2\beta} \frac{\partial^2v}{\partial z_n \partial \bar z_n} = \beta^2 \left(f - \Delta_e v \right).
\end{equation}
By Proposition \ref{weak-solution} there exists a function $\tilde g$ of class $C^\alpha$ on $B$ 
such that r.h.s. of \eqref{splitlaplacian} is equal to $\tilde g (z_1,\dots,z_{n-1},|z_n|^{\beta-1}z_n)$.
Therefore, for any choice of $(z_1, \dots , z_{n-1})$, we are in the situation of Lemma \ref{pointwise-secondderivative-estimate}.
In particular, from its proof it follows that $a(z_1,\dots,z_{n-1}) = \beta^{-2} \tilde g (z_1,\dots,z_{n-1},0)$ and $b(z_1,\dots,z_{n-1}) = \frac{\partial (v-a|z_n|^{2\beta})}{\partial z_n}(z_1,\dots,z_{n-1},0)$ whenever $\beta \geq 1/2$, and they are zero otherwhise.
Therefore it remains to show that $b$ is of class $C^\alpha$ under the assumption $\beta \geq 1/2$.
To this end, let $\tilde v$ be defined by $v(z) = \tilde v(z_1,\dots,z_{n-1},|z_n|^{\beta-1}z_n)$ so that $b(z_1,\dots,z_{n-1}) = \varphi(z_1,\dots,z_{n-1},0)$ where $\varphi$ is defined by
\begin{multline} 
\varphi (z) = \frac{1+\beta}{2} |z_n|^{\beta-1}\frac{\partial (\tilde v-a|z_n|^2)}{\partial z_n}(z_1,\dots,z_{n-1},|z_n|^{\beta-1}z_n) 
\\
- \frac{1-\beta}{2} |z_n|^{\beta-3} \bar z_n^2 \frac{\partial (\tilde v-a|z_n|^2)}{\partial \bar z_n}(z_1,\dots,z_{n-1},|z_n|^{\beta-1}z_n).
\end{multline}
By Proposition \ref{weak-solution}, $\tilde v$ is of class $C^{1,\alpha}$ and we already proved that $a$ is of class $C^\alpha$.
Thus for all $(x,z_n),(y,z_n) \in B$ with $z_n \neq 0$ one has
\begin{eqnarray}
|\varphi(x,z_n) - \varphi(y,z_n)| \nonumber
&\leq& C |z_n|^{\beta-1} \left| \frac{\partial \tilde v}{\partial z_n} (x,|z_n|^{\beta-1}z_n) - \frac{\partial \tilde v}{\partial z_n} (y,|z_n|^{\beta-1}z_n)\right| \nonumber \\
&& +  C |z_n|^{\beta-1} \left| \frac{\partial \tilde v}{\partial \bar z_n} (x,|z_n|^{\beta-1}z_n) - \frac{\partial \tilde v}{\partial \bar z_n} (y,|z_n|^{\beta-1}z_n)\right| \nonumber \\
&& + C |z_n|^{2\beta-1} \left| a(x) - a(y) \right| \nonumber \\
&\leq& C |z_n|^{\beta - 1} \left| x - y \right|^\alpha. \label{holderphi}
\end{eqnarray}
Now suppose that $b$ is not of class $C^\alpha$. Then for any integer $N>0$ there exist $x_N,y_N \in B \cap D$ such that $|b(x_N) - b(x_N)| \geq N |x_N -y_N|^\alpha$. Therefore for any fixed $z_n$ suficiently small one has
\begin{eqnarray*}
|\varphi(x_N,z_n) - \varphi(y_N,z_n)|
&\geq& \left| |\varphi(x_N,0) - \varphi(y_N,0)| - |\varphi(x_N,z_n) - \varphi(x_N,0) - \varphi(y_N,z_n) + \varphi(y_N,0)| \right| \\
&\geq& C |b(x_N) - b(y_N)| \\
&\geq& C N |x_N - y_N|^\alpha,
\end{eqnarray*}
which contradicts \eqref{holderphi}. This proves that $b$ is $C^\alpha$ and we are done.
\end{proof}

\section{Curvature of conic metrics}\label{sectCCM}

Fix real numbers $\alpha,\beta \in (0,1)$ such that $\alpha < \beta^{-1} - 1$.
Let $\omega$ be a smooth K\"ahler metric on a complex manifold $M$, let $D$ be a smooth divisor on $M$, and let be fixed a Hermitian metric on the line bundle $L$ associated to $D$.
Given a defining section $s \in H^0(M,L)$ for $D$, assume there exist a real number $k \geq 2\beta-1$ and non-negative smooth functions $u, F$ on $M$, with $u$ constant along $D$, such that
\begin{equation}\label{eqvanishingvolume}
\left( \omega + i\partial \bar \partial u \right)^n = |s|^{2k} F \omega^n.
\end{equation}
Note that this condition is empty if $\beta \leq 1/2$, for in this case one can take $k=0$, $u=0$ and $F=1$.
Assume moreover that
\begin{equation}\label{defomega0}
\omega_0 = \omega + i \partial \bar \partial \left(u + |s|^{2\beta}\right)
\end{equation}
is a conic K\"ahler metric of angle $2\pi\beta$.

For any $\phi \in \mathcal D_w^{0,\alpha}$ such that $\omega_\phi = \omega_0 + i \partial \bar \partial \phi$ is a conic K\"ahler metric of angle $2\pi\beta$ consider the function $f$ defined by 
\begin{equation}\label{fakeMA.bis} 
\omega_\phi^n= e^{f} \omega_0^n.
\end{equation}

In this section we will show that if $f$ is in $\mathcal D_w^{0,\alpha}$ then the Riemannian curvature of $\omega_\phi$ has H\"older continuous norm, thus proving Theorem \ref{boundedcurvatureintro}.

Before proceeding further we need to introduce some notation. 
First of all let $(z_1,\dots,z_n)$ be holomorphic coordinates on $M$ such that $D$ has local equation $z_n=0$.
Consider the local change of coordinates $z=\psi(w)$, where
$$\psi (w_1,\cdots, w_n) = (w_1,\cdots w_{n-1} , w_n ^{\frac{1}{\beta}})$$
is defined on $\Omega \subset \mathbf C^n$ which is the intersection of an open neighborhood of the origin with the set consisting of points $w$ satisfying
$0 < \arg(w_n) < \frac{2\beta \pi}{1+\beta}$
and $|w_n| >0$.
Note that $\psi$ is a biholomorphism on its image. 
Thus $(w_1,\dots,w_n)$ are local coordinates defined on an open set $A \subset M \setminus D$ such that $\overline A \cap D \neq \emptyset$. 
This is important for we will be interested in the behavior of some functions around $D$.

Note that by definition of conic metric, the local expression 
\begin{equation*}
\omega_\phi = \sum_{\mu, \nu=1}^n g_{\mu \bar \nu} i dw_\mu \wedge d \bar w_\nu
\end{equation*} is uniformly equivalent to the standard euclidean metric in these coordinates.
Therefore H\"older continuity of the Riemannian curvature
\begin{equation}\label{curvature} 
Rm_{\mu\bar \nu \rho \bar \theta} = - \frac{\partial^2 g_{\mu \bar \nu}}{\partial w_\rho \partial \bar w_\theta} +
\sum_{\sigma,\tau=1}^n g^{\sigma \bar \tau }\frac{\partial g_{\mu \bar \tau}}{\partial w_\rho} \frac{\partial g_{\sigma \bar \nu}}{\partial \bar w_\theta}
\end{equation}
will follows directly from H\"older continuity of partial derivatives of coefficients $g_{\mu\bar\nu}$.
In particular Theorem \ref{boundedcurvatureintro} is a direct consequence of the following

\begin{prop}\label{mainestimateforbackground}
Fix real numbers $\alpha,\beta \in (0,1)$ such that $\alpha < \beta^{-1} - 1$. 
If $\phi, f \in \mathcal D_w^{0,\alpha}$ satisfy equation (\ref{fakeMA.bis}), then
$$\frac{\p g_{\mu \bar \nu}}{\p w_\gamma}, \frac{\p^2 g_{\mu \bar \nu}}{\p w_\gamma\p \bar w_{\delta}} \in  C^\alpha(\overline A)  $$
for any $\gamma, \delta  \in \{1,\cdots , n\}$.
\end{prop}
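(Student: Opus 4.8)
The plan is to do all the work in the $w$-coordinates introduced above, in which $\psi^*\omega_\beta$ is the flat K\"ahler form, $\psi^*\Delta_\beta=\sum_\mu\partial_{w_\mu}\partial_{\bar w_\mu}$, and — by the very definition of a conic metric — the matrix $(g_{\mu\bar\nu})$ of $\omega_\phi$ is uniformly equivalent to the identity on $\overline A$; in particular the inverse $g^{\mu\bar\nu}$ is again of class $C^\alpha(\overline A)$ as soon as the $g_{\mu\bar\nu}$ are, and the latter is already contained in the hypothesis $\phi\in\mathcal D_w^{0,\alpha}$. Since $A\subset M\setminus D$, equation \eqref{fakeMA.bis} is on $A$ an honest non-degenerate complex Monge--Amp\`ere equation; the only issue is to obtain the estimates \emph{uniformly up to the portion of $\partial A$ lying on $\{w_n=0\}$}, and this I would do by the scaled interior Schauder argument already used in the proof of Lemma \ref{pointwise-secondderivative-estimate}, namely by working on Euclidean balls of radius comparable to $|w_n|$ and keeping track of the resulting powers of $|w_n|$.

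The first step is to put \eqref{fakeMA.bis} in a usable form. Using Theorem \ref{degen} together with hypothesis \eqref{eqvanishingvolume} — this is exactly where $k\ge 2\beta-1$ is needed — one has $|s|^{2-2\beta}\,\omega_0^n/\omega^n\in\mathcal D_w^{0,\alpha}$, hence also $|s|^{2-2\beta}\,\omega_\phi^n/\omega^n=e^{f}\cdot|s|^{2-2\beta}\,\omega_0^n/\omega^n\in\mathcal D_w^{0,\alpha}$ (using that $e^f\in\mathcal D_w^{0,\alpha}$ when $f$ is). Since $|s|^{2\beta-2}\psi^*\omega^n$ is a smooth strictly positive multiple of the Euclidean volume form, equation \eqref{fakeMA.bis} becomes in the $w$-chart
\[
\log\det\!\big(g_{\mu\bar\nu}\big)=G,\qquad
G=\psi^*f+\log\det\!\big(g_{0,\mu\bar\nu}\big)=\psi^*\log\!\big(|s|^{2-2\beta}\omega_\phi^n/\omega^n\big)+(\text{pullback of a smooth function on }M),
\]
so that, by the regularity theory recalled above (Proposition \ref{weak-solution} and the polyhomogeneous description of $\mathcal D_w^{0,\alpha}$ functions in Proposition \ref{corolphexp}), both $\partial_\gamma G$ and $\partial_\gamma\partial_{\bar\delta}G$ lie in $C^\alpha(\overline A)$, and moreover $\partial_\gamma(\psi^*\phi)\in C^\alpha(\overline A)$.

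Now I differentiate. Writing $g_{\mu\bar\nu}=g_{0,\mu\bar\nu}+\partial_\mu\partial_{\bar\nu}(\psi^*\phi)$ and applying $\partial_{w_\gamma}$ to $\log\det(g_{\mu\bar\nu})=G$, the function $\chi_\gamma:=\partial_{w_\gamma}(\psi^*\phi)$ solves
\[
g^{\rho\bar\sigma}\,\partial_\rho\partial_{\bar\sigma}\chi_\gamma
=\partial_\gamma G-g^{\rho\bar\sigma}\,\partial_\gamma g_{0,\rho\bar\sigma}
=\partial_\gamma(\psi^*f)+\big(g_0^{\rho\bar\sigma}-g^{\rho\bar\sigma}\big)\,\partial_\gamma g_{0,\rho\bar\sigma},
\]
a uniformly elliptic equation with $C^\alpha$ coefficients. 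The heart of the matter is that the right-hand side is of class $C^\alpha(\overline A)$: here one uses the structure of the reference metric $\omega_0$ forced by \eqref{eqvanishingvolume} — the exponential vanishing of $(\omega+i\partial\bar\partial u)^n$ along $D$, together with $u$ being constant along $D$, makes $\omega+i\partial\bar\partial u$ degenerate precisely in the normal direction along $D$, which is what prevents a term of order $|s|^{2-2\beta}$ (not of class $\mathcal D_w^{0,\alpha}$ when $\beta>1/2$) from surviving in this combination — together with Proposition \ref{corolphexp} to control the $z_n$-direction derivatives of $\psi^*\phi$ near $D$. Granting this, the scaled Schauder estimate upgrades $\chi_\gamma$ to class $C^{2,\alpha}$ up to $\{w_n=0\}$; since $\partial_{w_\gamma}(\partial_{w_\mu}\partial_{\bar w_\nu}\psi^*\phi)=\partial_\gamma g_{\mu\bar\nu}-\partial_\gamma g_{0,\mu\bar\nu}$ and $\partial_\gamma g_{0,\mu\bar\nu}\in C^\alpha(\overline A)$ by the same background analysis, this yields $\partial_\gamma g_{\mu\bar\nu}\in C^\alpha(\overline A)$, the first half of the statement.

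With the first $w$-derivatives of $g_{\mu\bar\nu}$ now controlled, I apply $\partial_{w_\gamma}\partial_{\bar w_\delta}$ to $\log\det(g_{\mu\bar\nu})=G$ and commute derivatives ($\partial_\gamma\partial_{\bar\delta}g_{\rho\bar\sigma}=\partial_\rho\partial_{\bar\sigma}g_{\gamma\bar\delta}$) to get
\[
g^{\rho\bar\sigma}\,\partial_\rho\partial_{\bar\sigma}g_{\gamma\bar\delta}
=\partial_\gamma\partial_{\bar\delta}G+g^{\rho\bar\tau}g^{\lambda\bar\sigma}\,\big(\partial_\gamma g_{\lambda\bar\tau}\big)\big(\partial_{\bar\delta}g_{\rho\bar\sigma}\big),
\]
whose right-hand side is now in $C^\alpha(\overline A)$, the quadratic term being a product of $C^\alpha$ functions with the $C^\alpha$ coefficients $g^{\rho\bar\tau},g^{\lambda\bar\sigma}$; a second scaled Schauder estimate then gives $g_{\gamma\bar\delta}\in C^{2,\alpha}$ up to $\{w_n=0\}$, hence $\partial_\gamma\partial_{\bar\delta}g_{\mu\bar\nu}\in C^\alpha(\overline A)$, which completes the proof. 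The step I expect to be the main obstacle is precisely the assertion, in the previous paragraph, that the right-hand side of the once-differentiated equation is $C^\alpha$ up to $D$: despite the singular change of variables $z_n=w_n^{1/\beta}$, no blow-up at $\{w_n=0\}$ may survive in $\partial_\gamma g_{0,\rho\bar\sigma}$ nor in its pairing with $g_0^{\rho\bar\sigma}-g^{\rho\bar\sigma}$, and verifying this is exactly where the precise polyhomogeneous expansions of $\mathcal D_w^{0,\alpha}$ functions (Proposition \ref{corolphexp}, built on the pointwise estimates of Lemma \ref{pointwise-secondderivative-estimate}) and the vanishing condition \eqref{eqvanishingvolume} do the real work; the remaining ingredients — uniform ellipticity in the $w$-chart and the rescaling by $|w_n|$ in the Schauder estimates — are routine.
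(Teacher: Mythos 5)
Your overall scheme matches the paper's: take the logarithm of \eqref{fakeMA.bis}, differentiate once and twice in the $w$-variables, control the background term through the expansion of $|s|^{2-2\beta}\omega_0^n/\omega^n$ forced by \eqref{eqvanishingvolume} (this is exactly the content of the paper's two Claims, via \eqref{eqexpansionvolume}--\eqref{eqratiovol}), and then feed the resulting linear elliptic equations into a regularity theorem. The identification of where $k\geq 2\beta-1$ enters, and of Proposition \ref{corolphexp} as the tool for checking that the differentiated potential is a legitimate weak solution, are both correct and agree with the paper.

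The genuine gap is in the regularity engine you substitute for Theorem \ref{donaldson-regularity}. The paper pulls the differentiated equation back to the $z$-coordinates, where it becomes a conic Poisson equation $\Delta_c v = f$ with $f\in C_w^{0,\alpha}$, and invokes the Donaldson--Jeffres--Mazzeo--Rubinstein Schauder theory to conclude $v\in\mathcal D_w^{0,\alpha}$; this is precisely where the hypothesis $\alpha<\beta^{-1}-1$ is used. You instead stay in the $w$-chart and claim that ``scaled interior Schauder estimates on balls of radius comparable to $|w_n|$'' give $C^{2,\alpha}$ regularity \emph{up to} $\{w_n=0\}$, calling the rescaling ``routine''. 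It is not: the interior estimate on $B_{r}(w_0)$ with $r\sim|w_{0,n}|$ carries a factor $r^{-2-\alpha}\|\chi\|_{L^\infty(B_r)}$ on the right-hand side, so the bounds degenerate as $w_0$ approaches the edge unless one first subtracts from $\chi$ the leading terms of a polyhomogeneous expansion so that the remainder is $O(r^{2+\alpha'})$ on each such ball. Carrying out that subtraction for the \emph{variable-coefficient} operator $g^{\rho\bar\sigma}\partial_\rho\partial_{\bar\sigma}$ (rather than for the model $\partial_z\partial_{\bar z}$, which is what Lemma \ref{pointwise-secondderivative-estimate} handles, and where the threshold $\bar p=2/(2-2\beta-\alpha'\beta)>2$ is exactly the source of the condition $\alpha<\beta^{-1}-1$) amounts to re-proving Theorem \ref{donaldson-regularity}; your sketch acknowledges this is ``the main obstacle'' but does not supply the argument. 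The fix is simply to do what the paper does: verify via Proposition \ref{corolphexp} that $\frac{\partial(h+\phi)}{\partial w_\gamma}$ is a $W^{1,2}$ weak solution of the conic Poisson equation with $C_w^{0,\alpha}$ right-hand side, and then cite Theorem \ref{donaldson-regularity} to place it in $\mathcal D_w^{0,\alpha}$, whose definition directly yields the $C^\alpha(\overline A)$ membership of the complex Hessian, i.e.\ of $\frac{\partial g_{\mu\bar\nu}}{\partial w_\gamma}$; the second-order statement then follows by one more differentiation exactly as you outline.
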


\begin{proof}
Let $h$ be a local potential for the background conic metric $\omega_0$, so that one has $\omega_0 = \sum_{\mu, \nu=1}^n \frac{\partial^2 h}{\partial w_\mu \partial \bar w_\nu} i dw_\mu \wedge d \bar w_\nu$, and any coefficient
\begin{equation}
g_{\mu \bar \nu} = \frac{\partial^2 (h+\phi)}{\partial w_\mu \partial \bar w_\nu}
\end{equation}
is of class $C^\alpha$ on $\overline A$.  
With these notations, taking the logarithm of \eqref{fakeMA.bis} gives
\begin{equation}\label{fakeMA.locbis}
\log \det \left( \frac{\partial^2 (h+\phi)}{\partial w_\mu \partial \bar w_\nu} \right)
= f + \log \det \left( \frac{\partial^2 h}{\partial w_\mu \partial \bar w_\nu} \right).
\end{equation}
Differentiating with respect to $w_\gamma$ yields
\begin{equation}\label{local-MA-2}
\Delta_\phi \left( \frac{\partial (h+\phi)}{\partial w_\gamma }\right) 
= \frac{\partial f}{\partial w_\gamma} + \Delta_0 \left(\frac{\partial h}{\partial w_\gamma}\right),
\end{equation}
where $\Delta_\phi$ and $\Delta_0$ denote the Laplacian with respect to the conic metrics $\omega_\phi$ and $\omega_0$ respectively.

\begin{claim}
$(\psi^{-1})^* \left(\Delta_0 \left(\frac{\partial h}{\partial w_\gamma}\right)\right)$ is of class $C_w^{0,\alpha}$.
\end{claim}

Before proving this Claim we conclude the argument that implies the statement of the Proposition.
Pulling back via $\psi^{-1}$ equation \eqref{local-MA-2} to $z_i$'s coordinates, one gets a conic Poisson equation with r.h.s. of class $C_w^{0,\alpha}$.
Moreover, thanks to Proposition \ref{corolphexp}, it is immediate to check that $\frac{\partial(h + \phi)}{\partial w_\gamma}$ is indeed a $W^{1,2}$ solution of this Poisson equation.

Thus by Theorem \ref{donaldson-regularity} one has $(\psi^{-1})^* \left( \frac{ \p (h+\phi)}{\p w_\gamma }\right) \in \mathcal D_w^{0,\alpha}$, whence by definition of class $\mathcal D_w^{0,\alpha}$ it follows that the complex hessian of $\frac{ \p (h+\phi)}{\p w_\gamma }$ is in $C^\alpha (\overline A)$. That is
\begin{equation*}
\frac{\p g_{\mu \bar \nu}}{\p w_\gamma} \in C^\alpha(\overline A) 
\end{equation*}
for all $w_\mu$ and $\bar w_\nu$.
Differentiating equation \eqref{local-MA-2} with respect to $\bar w_\delta$ yields
\begin{equation}
\Delta_\phi \left( \frac{\partial^2 (h+\phi)}{\partial \bar w_\delta \partial w_\gamma} \right)
= \frac{\partial}{\partial \bar w_\delta} \left( \frac{\partial f}{\partial w_\gamma} + \Delta_0 \left(\frac{\partial h}{\partial w_\gamma}\right) \right)
- \sum_{i,j,h,k=1}^n g^{i \bar k} g^{h \bar j} \frac{\partial g_{h \bar k}}{\partial \bar w_\delta} \frac{\partial g_{i \bar j}}{\p w_\gamma}.
\end{equation}
\begin{claim}
$(\psi^{-1})^*  \frac{\partial}{\partial \bar w_\delta} \left( \Delta_0 \left(\frac{\partial h}{\partial w_\gamma}\right) \right)$ is of class $C_w^{0,\alpha}$.
\end{claim}

As we did earlier, we postpone the proof of this Claim.
Arguing as above one concludes that the complex hessian of $\frac{\partial^2 (h+\phi)}{\partial \bar w_\delta \partial w_\gamma}$ is in $C^\alpha (\overline A)$.
Therefore differentiating by $w_\mu$ and $\bar w_\nu$ gives the statement.

\medskip
Finally it remains to prove the Claims.
Let $i\Theta$ be the curvature of the Chern connection of the Hermitian line bundle $L$.
Thanks to the identity 
\begin{equation}
i \partial \bar \partial |s|^{2\beta} = -\beta |s|^{2\beta} \Theta + \beta^ 2|s|^{2\beta - 4} i \partial |s|^2 \wedge \bar \partial |s|^2, 
\end{equation}
starting from \eqref{defomega0}
one calculates:
\begin{equation*}
\omega_0^n 
= \left( \omega + i \partial \bar \partial u - \beta |s|^{2\beta} \Theta + n\beta^2 |s|^{2\beta-4} i \partial |s|^2 \wedge \bar \partial |s|^2 \right) \wedge \left( \omega + i \partial \bar \partial u - \beta |s|^{2\beta} \Theta \right)^{n-1}.
\end{equation*}
Clearly this defines smooth functions $a_j$ depending on $\Theta$, $\omega+i\partial\bar\partial u$, and $|s|^{-2} i \partial |s|^2 \wedge \bar \partial |s|^2$ such that one has the expansion
\begin{equation}\label{eqexpansionvolume}
\omega_0^n = (\omega + i \partial \bar \partial u)^n + |s|^{2\beta-2} \sum_{j=0}^{n-1} a_j |s|^{2j\beta} \omega^n.
\end{equation}
Since $u$ is assumed to be constant along $D$, one has
$a_0 = \frac{n\beta^2 |s|^{- 2} i \partial |s|^2 \wedge \bar \partial |s|^2 \wedge (\omega + i\partial\bar\partial u)^{n-1}}{\omega^n} >0 $. 
On the other hand, plugging \eqref{eqvanishingvolume} into \eqref{eqexpansionvolume} gives
\begin{equation}\label{eqratiovol}
\frac{|s|^{2-2\beta}\omega_0^n}{\omega^n}
= |s|^{2(k+1-\beta)}F + \sum_{j=0}^{n-1} a_j |s|^{2j\beta}.
\end{equation}
The hypothesis on $k$ implies $k+1-\beta \geq \beta$, whence it follows that the r.h.s. of \eqref{eqratiovol} is of class $\mathcal D_w^{0,\alpha}$.
Since it is positive as well, one concludes that
\begin{equation}
\label{cazzucazzu}
\log\left(\frac{|s|^{2-2\beta}\omega_0^n}{\omega^n}\right) \in \mathcal D_w^{0,\alpha}.
\end{equation}
Therefore there exists a function $H$ of class $\mathcal D_w^{0,\alpha}$ such that in local coordinates one has
\begin{equation}
\log \det \left( \frac{\partial^2 h}{\partial w_\mu \partial \bar w_\nu } \right) = \psi^* H.
\end{equation}

Taking derivatives with respect to $w_\gamma$ and $\bar w_\delta$ then proves the claims thanks to definition of classes $C_w^{0,\alpha}$ and $\mathcal D_w^{0,\alpha}$.
\end{proof}

In order to prove Theorem \ref{main1} we need the following
 
\begin{prop}\label{C2abricci}
Given any $\Omega \in  c_1(M) - (1-\beta) c_1(L)$ with $\mathcal D_w^{0,\alpha}$ local potential, there exists $F^\Omega \in \mathcal D_w^{0,\alpha}$ such that
\begin{equation}\label{riccipot}
\Ric(\omega_0) = \Omega + (1-\beta) \delta_D + i\partial \bar \partial F^\Omega.
\end{equation}

Moreover if $ c_1(M) = \lambda [\omega] + (1-\beta) c_1(L)$ for some real $\lambda$, then $\omega_0$ has Ricci potential of class $\mathcal D_w^{0,\alpha}$, i.e. there exists $F^\lambda \in \mathcal D_w^{0,\alpha}$ such that 
\begin{equation}\label{Riccipotential}
\Ric (\omega_0) = \lambda \omega_0 +(1-\beta)\delta_D + i \partial\bar{\partial}F^\lambda.
\end{equation}
\end{prop}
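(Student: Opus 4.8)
The plan is to derive \eqref{riccipot} and \eqref{Riccipotential} by exploiting the computation \eqref{cazzucazzu} already obtained in the proof of Proposition \ref{mainestimateforbackground}, namely that $\log\bigl(|s|^{2-2\beta}\omega_0^n/\omega^n\bigr)$ is of class $\mathcal D_w^{0,\alpha}$. The starting identity is the Poincaré--Lelong type formula: for the Hermitian metric $h$ on $L$ one has $i\partial\bar\partial \log|s|^2 = -\Theta + \delta_D$ as currents, hence $\Ric(\omega_0) = -i\partial\bar\partial\log\omega_0^n = -i\partial\bar\partial\log\omega^n - i\partial\bar\partial\log\bigl(|s|^{2-2\beta}\omega_0^n/\omega^n\bigr) + (1-\beta)i\partial\bar\partial\log|s|^2$, and the last term is $-(1-\beta)\Theta + (1-\beta)\delta_D$. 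Here $-i\partial\bar\partial\log\omega^n$ represents $c_1(M)$ and $-(1-\beta)\Theta$ represents $-(1-\beta)c_1(L)$, both by smooth forms, so the smooth part of $\Ric(\omega_0) - (1-\beta)\delta_D$ is a smooth closed $(1,1)$-form in the class $c_1(M) - (1-\beta)c_1(L)$.

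For the first statement, I would write the given $\Omega = \Omega_{\mathrm{sm}} + i\partial\bar\partial\eta$ where $\Omega_{\mathrm{sm}}$ is the smooth representative of $c_1(M)-(1-\beta)c_1(L)$ appearing above (Ricci of $\omega$ minus $(1-\beta)\Theta$) and $\eta \in \mathcal D_w^{0,\alpha}$ is the given local potential --- strictly speaking $\Omega$ and this smooth form differ by $i\partial\bar\partial$ of a function which, being the difference of two $\mathcal D_w^{0,\alpha}$ potentials in the same class, can be taken in $\mathcal D_w^{0,\alpha}$ (globally, up to the usual $\partial\bar\partial$-lemma argument on the compact manifold applied away from $D$ and matched with the local behaviour). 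Then combining with the displayed formula for $\Ric(\omega_0)$ gives $\Ric(\omega_0) - \Omega - (1-\beta)\delta_D = i\partial\bar\partial F^\Omega$ with $F^\Omega = -\log\bigl(|s|^{2-2\beta}\omega_0^n/\omega^n\bigr) - \eta$ (up to sign and the matching function), which lies in $\mathcal D_w^{0,\alpha}$ since that class is a vector space and contains both summands.

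For the second statement, under $c_1(M) = \lambda[\omega] + (1-\beta)c_1(L)$ one applies the $\partial\bar\partial$-lemma to the two smooth closed forms $\mathrm{Ric}(\omega) - (1-\beta)\Theta$ and $\lambda\omega$, which lie in the same de Rham class, to get a smooth function $\rho$ on $M$ with $\mathrm{Ric}(\omega) - (1-\beta)\Theta = \lambda\omega + i\partial\bar\partial\rho$. Since $\omega_0 = \omega + i\partial\bar\partial(u+|s|^{2\beta})$, we have $\lambda\omega = \lambda\omega_0 - \lambda\, i\partial\bar\partial(u+|s|^{2\beta})$, and $u+|s|^{2\beta} \in \mathcal D_w^{0,\alpha}$. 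Substituting into the formula for $\Ric(\omega_0)$ then yields \eqref{Riccipotential} with $F^\lambda = \rho - \lambda(u+|s|^{2\beta}) - \log\bigl(|s|^{2-2\beta}\omega_0^n/\omega^n\bigr)$, again a sum of $\mathcal D_w^{0,\alpha}$ functions.

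The main obstacle I expect is not the formal manipulation but making the current identity $\Ric(\omega_0) = -i\partial\bar\partial\log\omega_0^n$ rigorous across $D$: one must check that $\log\omega_0^n$, written as $\log\omega^n + \log\bigl(|s|^{2-2\beta}\omega_0^n/\omega^n\bigr) + (\beta-1)\log|s|^2$, has $i\partial\bar\partial$ (in the sense of currents on $M$) equal to the claimed combination of a smooth form, the smooth-form part, and $(1-\beta)\delta_D$ --- i.e. that no extra distributional mass is hidden in the $\mathcal D_w^{0,\alpha}$ term or in the interaction with $|s|^{2\beta-2}$. This is handled by the polyhomogeneous/H\"older structure established in Proposition \ref{corolphexp} and the Poincaré--Lelong formula, together with the observation (already used around \eqref{eqratiovol}) that the ratio is bounded and positive, so that $\log$ of it is genuinely in $\mathcal D_w^{0,\alpha}$ and contributes no boundary term. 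A secondary technical point is the global $\partial\bar\partial$-lemma step producing the matching $\mathcal D_w^{0,\alpha}$ function in part one, which requires knowing that a global function on $M$, smooth off $D$ and locally $\mathcal D_w^{0,\alpha}$, whose $i\partial\bar\partial$ vanishes in the class, can be normalised within the class --- but this follows from the definitions since the difference of two local $\mathcal D_w^{0,\alpha}$ potentials for the same current is pluriharmonic off $D$ and bounded, hence smooth across $D$.
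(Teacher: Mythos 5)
Your proposal is correct and follows essentially the same route as the paper's proof: the Lelong identity $\Theta + i\partial\bar\partial\log|s|^2 = \delta_D$, the $\partial\bar\partial$-lemma applied to $\Ric(\omega)$ relative to $\Omega + (1-\beta)\Theta$ (resp.\ $\lambda\omega + (1-\beta)\Theta$), and the fact from \eqref{cazzucazzu} that $\log\bigl(|s|^{2-2\beta}\omega_0^n/\omega^n\bigr)$ is of class $\mathcal D_w^{0,\alpha}$, yielding exactly the paper's formulas \eqref{fbeta} and \eqref{eq::exprf} for $F^\Omega$ and $F^\lambda$. The extra technical remarks you add about the current identity across $D$ are sensible but do not change the argument.
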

\begin{proof}
The curvature $\Theta$ of the Hermitian line bundle $L$ and the integration current $\delta_D$ alog $D$ are related by the Lelong equation $ \Theta + i \partial \bar \partial \log |s|^2 = \delta_D$.
Therefore by the $\partial \bar \partial$-lemma there is a function $f^{\Omega}$ on $M$ of class $\mathcal D_w^{0,\alpha}$ such that
$$ \Ric(\omega) = \Omega + (1-\beta) \Theta + i \partial \bar \partial f^{\Omega}.$$
Moreover we have
$$ \Ric(\omega_0) = \Ric(\omega) -i\partial\bar \partial
\log \frac{\omega_0^n}{\omega^n} = \Omega + (1-\beta) \delta_D +
i \partial \bar \partial f^{\Omega} -i\partial\bar \partial
\log \frac{|s|^{2-2\beta}\omega_0^n}{\omega^n} $$
which yields \eqref{riccipot} with $F^\Omega$ given by
\begin{equation}
\label{fbeta} F^\Omega = f^\Omega - \log \frac{|s|^{2-2\beta} \omega_0^n}{\omega^n}.
\end{equation}

As we just observed in equation (\ref{cazzucazzu}) in the proof of Proposition \ref{mainestimateforbackground}, the function $\log\left(\frac{|s|^{2-2\beta}\omega_0^n}{\omega^n}\right)$ is of class $D_w^{0,\alpha}$, and hence $F^\Omega$ belongs to the same space.

For the second part, by $\partial \bar \partial$-lemma there is a smooth function $f_0$ on $M$ such that
$$ \Ric(\omega) = \lambda \omega + (1-\beta) \Theta + i \partial \bar \partial f_0.$$
Again by the Lelong equation $ \Theta + i \partial \bar \partial \log |s|^2 = \delta_D$, one then has
$$ \Ric(\omega) = \lambda \omega_0 + (1-\beta) \delta_D + i \partial \bar \partial \left(f_0 - \lambda(u+|s|^{2\beta}) - \log |s|^{2-2\beta}\right). $$
Thus \eqref{Riccipotential} holds by
$$ \Ric(\omega_0) 
= \Ric(\omega) - i\partial\bar \partial \log \frac{\omega_0^n}{\omega^n} $$
with
\begin{equation}\label{eq::exprf}
F^\lambda = f_0 - \lambda(u+|s|^{2\beta}) - \log \frac{|s|^{2-2\beta}\omega_0^n}{\omega^n}.
\end{equation}
\end{proof}

\section{The Background Metric}

We now face the problem of whether there exists a conic metric $\omega_0$ of a given angle $2\pi\beta$ of the form required by the hypotheses of Theorem \ref{boundedcurvatureintro}.
Let us first start with an elementary but useful observation.

\begin{lemma}\label{Meta}
Let $\theta$ be a smooth non-negative function on $\mathbf R$ with support in $[-1,1]$ such that $\int_{\mathbf R} \theta(x)dx = 1$ and $\int_{\mathbf R} x \theta(x) dx =0$.
For any $\eta>0$ the function $M_\eta : \mathbf R^2 \to \mathbf R$ defined by
\begin{equation}\label{Metashift}
M_\eta (t) = \int_{\mathbb R^2} \max(t_1 + h_1, t_2 + h_2) \theta\left( \eta h_1 \right) \theta\left( \eta^{-1} h_2 \right) dh_1 dh_2
\end{equation}
is smooth, convex and satisfies
\begin{equation}\label{eq::Metasymp}
M_\eta (t) =
\left\{ 
\begin{array}{ll} 
t_1 & \mbox{ if }\quad t_1 \geq t_2 + \eta + \eta^{-1} \\ 
t_2 & \mbox{ if }\quad t_2 \geq t_1 + \eta + \eta^{-1}.
\end{array} 
\right. 
\end{equation}
Moreover one has
$$ 0 \leq \frac{\partial M_\eta (t)}{\partial t_i} \leq 1. $$
\end{lemma}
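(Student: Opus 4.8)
The plan is to analyze the regularized maximum $M_\eta$ directly from its defining convolution formula \eqref{Metashift}. First I would establish smoothness and the asymptotic formula \eqref{eq::Metasymp}: by the change of variables $h_1 \mapsto \eta^{-1} h_1$, $h_2 \mapsto \eta h_2$, one writes $M_\eta(t) = \int_{\mathbf R^2} \max(t_1 + \eta^{-1} h_1, t_2 + \eta h_2)\,\theta(h_1)\theta(h_2)\,dh_1\,dh_2$, so that the integrand is a fixed (Lipschitz) function of $t$ convolved against the smooth compactly supported kernel $\theta(h_1)\theta(h_2)$; smoothness of $M_\eta$ then follows by differentiating under the integral sign, the max being differentiable except on a measure-zero set. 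The function $\theta$ being supported in $[-1,1]$, the variables $\eta^{-1}h_1$ and $\eta h_2$ range over $[-\eta^{-1},\eta^{-1}]$ and $[-\eta,\eta]$ respectively; hence if $t_1 \geq t_2 + \eta + \eta^{-1}$ then $t_1 + \eta^{-1}h_1 \geq t_1 - \eta^{-1} \geq t_2 + \eta \geq t_2 + \eta h_2$ for all $h_1, h_2$ in the support, so the max equals $t_1 + \eta^{-1} h_1$ and integrating against the kernel gives $t_1$ using $\int \theta = 1$ and $\int x\theta(x)\,dx = 0$. The symmetric case is identical.

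Next I would prove convexity: $\max(t_1 + h_1, t_2 + h_2)$ is a convex function of $t = (t_1, t_2)$ for each fixed $(h_1, h_2)$ (it is the maximum of two affine functions), and convexity is preserved under taking averages with respect to the non-negative weight $\theta(\eta h_1)\theta(\eta^{-1}h_2)\,dh_1\,dh_2$; hence $M_\eta$ is convex.

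For the bound $0 \leq \partial M_\eta(t)/\partial t_i \leq 1$: differentiating under the integral sign (using the rescaled form above) gives $\partial M_\eta(t)/\partial t_1 = \int_{\mathbf R^2} \mathbf 1_{\{t_1 + \eta^{-1}h_1 > t_2 + \eta h_2\}}\,\theta(h_1)\theta(h_2)\,dh_1\,dh_2$, which is an integral of a $\{0,1\}$-valued function against a probability density, hence lies in $[0,1]$; the computation for $\partial M_\eta/\partial t_2$ is the same with the indicator of the complementary event. (One may note as a sanity check that the two partials sum to $1$, consistent with $M_\eta(t+c,t+c) = M_\eta(t) + c$.)

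I do not anticipate a serious obstacle here: every step is a routine consequence of the fact that $M_\eta$ is the mollification of the convex Lipschitz function $\max$. The only point requiring a little care is justifying differentiation under the integral sign — one invokes that the max of two affine functions is Lipschitz with the needed uniform bounds and piecewise $C^\infty$, so dominated convergence applies; alternatively one differentiates the smooth kernel instead, writing $M_\eta(t) = (\max(\cdot)) * \kappa_\eta$ with $\kappa_\eta$ smooth, moving all derivatives onto $\kappa_\eta$. The asymptotic formula \eqref{eq::Metasymp} is the one place where the two normalization conditions on $\theta$ are used essentially, so I would be careful to track the support threshold $\eta + \eta^{-1}$ exactly as above.
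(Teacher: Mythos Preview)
Your proof is correct and complete. The paper itself does not give an argument at all: it simply cites Demailly's \emph{Complex Analytic and Differential Geometry}, Lemma~5.18(a), of which this statement is a special case. Your direct verification---change of variables to normalize the kernel, convexity as an average of convex functions, the asymptotic identity from the support constraint together with the two moment conditions on $\theta$, and the derivative bounds via the indicator formula---is exactly the standard argument behind that reference, specialized to two variables. There is nothing to compare beyond noting that you have supplied the details the paper outsources.
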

\begin{proof}
This is  a special case of \cite[page 43, Lemma 5.18(a)]{DemaillyCAG}.
\end{proof}

We can now proceed with the proofs of Theorems \ref{degen} and \ref{main1}.
\begin{proof}[Proof of Theorem \ref{degen}]
Let $p$ be the bundle projection from $L|_D$ to $D$ and let $\iota$ be the inclusion of $D$ into $L|_D$ as the zero section.
Since $p$ is holomorphic, $p^*$ maps the Dolbeault cohomology groups $H^{1,1}(D)$ into $H^{1,1}(L|_D)$, and being a retraction 
$p^* : H^{1,1}(D) \to  H^{1,1}(L|_D)$ and $\iota^* : H^{1,1}(L|_D) \to  H^{1,1}(D)$ are isomorphisms inverse to each other. 
As a consequence the restriction to $V$ of $p^*\omega|_D$ is cohomologous to $(\Upsilon^{-1})^* \omega|_U$ so that there is a smooth function $v$ on $V$ such that
\begin{equation}
(\Upsilon^{-1})^* \omega|_U = p^*\omega|_D + i \partial \bar \partial v.
\end{equation}
Since $\Upsilon$ induces the identity on $D$, the function $v$ is constant along $D$, and one can assume that $v$ vanishes on $D$.

Now consider the smooth function on $L|_D$ which associates $h(\ell,\ell)$ to any $\ell \in L|_D$, being $h$ the given Hermitian metric on $L$. By a little abuse of notation we will denote such function by $h$.
One has
\begin{eqnarray}
\nonumber \left( p^*\omega|_D + i \partial \bar \partial e^{-\frac{1}{h}} \right)^n 
&=&
\left( p^*\omega|_D + \left( \frac{e^{-\frac{1}{h}}}{h^4} i \partial h \wedge \bar \partial h - \frac{2e^{-\frac{1}{h}}}{h^3} i \partial h \wedge \bar \partial h + \frac{e^{-\frac{1}{h}}}{h^2} i \partial \bar \partial h \right) \right)^n \\
&=&
\frac{e^{-\frac{1}{h}}}{h^4}\left( n \, p^*\omega|_D^{n-1} \wedge i \partial h \wedge \bar \partial h + R \right)
\end{eqnarray}
for some smooth function $R = O\left(h^2\right)$ as $h \to 0$.
Since $i\partial h \wedge \bar \partial h$ is smooth, non-negative and positive in the direction of fibers of $L|_D$, after possibly shrinking $V$ it follows that
\begin{equation}\label{eq::posU-D}
\left( p^*\omega|_D + i \partial \bar \partial e^{-\frac{1}{h}} \right)^n  >0 \quad \mbox{on} \quad  V \setminus D.
\end{equation}
Let $s \in H^0(M,L)$ be a defining section of $D$.
One can readily check by local calculations that $\Upsilon^*h = |s|^2 \tilde h$ for some positive smooth function $\tilde h$ on $U$, whence
\begin{equation}
\Upsilon^*\left( p^*\omega|_D + i \partial \bar \partial e^{-\frac{1}{h}} \right)^n
=
\frac{e^{-\frac{1}{|s|^2 \tilde h}}}{|s|^8 \tilde h^4} \Upsilon^*\left( n \, p^*\omega|_D^{n-1} \wedge i \partial h \wedge \bar \partial h + R \right).
\end{equation}
Thus by \eqref{eq::posU-D} for any $k>0$ letting
$$ |s|^{-2k} \Upsilon^*\left( p^*\omega|_D + i \partial \bar \partial e^{-\frac{1}{h}} \right)^n $$
defines a smooth $2n$-form on $U$ that is positive on $U\setminus D$.

Letting
$$ \tilde u = \Upsilon^*(e^{-\frac{1}{h}} - v)$$
defines a smooth function on $U$ satisfying ${\omega_{\tilde u}}|_D= \omega|_D $, and for any $k>0$ the form $|s|^{-2k} \omega_{\tilde u}^n$ is positive on $U \setminus D$ and extends to a smooth form on $U$.

The function $u$ will be finally obtained by gluing $\tilde u$ together with a suitable function $q$ which is $\omega$-plurisubharmonic away from $D$. Clearly \ref{item::omegaD} and \ref{item::vanishingD} will hold for such a function $u$ since these conditions are local around $D$ and we proved above that they are satisfied by $\tilde u$ on $U$.
Let
$$ q = \frac{1}{\eta^2} + \eta \log |s|^2,$$
where $\eta > 0$ is a constant to be determined. Note that $\omega + i \partial \bar \partial q >0$ on $M \setminus D$ for all sufficiently small $\eta$. Choose $r >0$ so that $U_r = \{ |s|^2<r\} \subset U$.
Perhaps taking $\eta$ smaller, one can make $q$ satisfying 
$$q>\tilde u+\eta+\eta^{-1} \mbox { on } \partial U_r.$$
On the other hand, since $q$ goes to $-\infty$ around $D$ and $\tilde u$ is bounded, one can find $r'<r$ such that $\tilde u > q+\eta+\eta^{-1}$ on $\overline{U_{r'}}$. 
Now consider $M_\eta$ as in Lemma \ref{Meta} and define $u$ by
$$
u(x) = 
\left\{
\begin{array}{ll} 
\tilde u(x) & x \in U_{r'} \\
M_\eta(\tilde u(x),q(x)) & x \in U_r \setminus U_{r'}\\ 
q(x) & x \in M \setminus U_r
\end{array}
\right.
$$
Note that we chose $r,r'$ so that $u=q$ around $\partial U_r$ and $u=\tilde u$ around $\partial U_{r'}$ by Lemma \ref{Meta}. This fact together with smoothness of $M_\eta$ implies that $u$ is smooth.
By definition of $u$, we have to check the $\omega$-plurisubharmonicity only on the set $U_r \setminus U_{r'}$.
From the convexity of $M_\eta$ one gets the following lower bound:
\begin{eqnarray*}
i \partial \bar \partial u &=& 
i \partial \bar \partial M_\eta (\tilde u,q) \\
&=& i \partial \left( \frac{\partial M_\eta}{\partial t_1}(\tilde u,q) \bar \partial \tilde u + \frac{\partial M_\eta}{\partial t_2}(\tilde u,q) \bar \partial q \right) \\
&=& \frac{\partial^2 M_\eta}{\partial t_1^2}(\tilde u,q) i \partial \tilde u \wedge \bar \partial \tilde u + \frac{\partial^2 M_\eta}{\partial t_1 \partial t_2}(\tilde u,q) i \partial \tilde u \wedge \bar \partial q \\ 
 && + \frac{\partial^2 M_\eta}{\partial t_2 \partial t_1}(\tilde u,q) i \partial q \wedge \bar \partial \tilde u + \frac{\partial^2 M_\eta}{\partial t_2^2}(\tilde u,q) i \partial p \wedge \bar \partial q\\
 && + \frac{\partial M_\eta}{\partial t_1}(\tilde u,q) i \partial \bar \partial \tilde u + \frac{\partial M_\eta}{\partial t_2}(\tilde u,q) i \partial \bar \partial q \\
 &\geq& \frac{\partial M_\eta}{\partial t_1}(\tilde u,q) i \partial \bar \partial \tilde u + \frac{\partial M_\eta}{\partial t_2}(\tilde u,q) i \partial \bar \partial q.
 \end{eqnarray*}
On the other hand, by definition of $q$ it follows that
$$i \partial \bar \partial q = - \eta \Theta, $$
where $i\Theta$ the curvature of the Chern connection of the Hermitian line bundle $(L,h)$, 
whence
$$ \omega + i \partial \bar \partial u \geq \omega + \frac{\partial M_\eta}{\partial t_1}(\tilde u,q) i \partial \bar \partial \tilde  u - \eta \frac{\partial M_\eta}{\partial t_2}(\tilde u,q) \Theta. $$
By $\omega$-plurisubharmonicity of $\tilde u$ and Lemma \ref{Meta} it is clear that $\eta$ can be chosen small so that $\omega + i \partial \bar \partial u>0$.

Now it remains to check \ref{item::conic}. On $M \setminus D$ one has
\begin{equation*}
i \partial \bar \partial |s|^{2\beta} 
= -\beta |s|^{2\beta} \Theta + \beta^ 2|s|^{2\beta} i \partial \log |s|^2 \wedge \bar \partial \log|s|^2
\geq - \beta |s|^{2\beta} \Theta,
\end{equation*}
whence is clear that $s$ can be rescaled so that $\omega + i \partial \bar \partial (u + |s|^{2\beta}) >0$ for all $\beta \in (0,1)$.
Indeed one can readily check that assuming $\sup_M |s|^2 < 1$ one has the bound $\beta|s|^{2\beta} \leq (-e \log \sup_M |s|^2)^{-1}$ which does not depend on $\beta$.
This concludes the proof.
\end{proof}

\begin{proof}[Proof of Theorem \ref{main1}] By Theorem \ref{degen} there exists a conic background metric in $[\omega]$ with local potential and Ricci potential of class $\mathcal D_w^{0,\alpha}$, thus by Theorem \ref{boundedcurvatureintro} such a metric has $\alpha$-H\"older continuous curvature. Then arbitrariness of $\omega$ proves point \eqref{exKbound}. In the same way point \eqref{exCalabi} and the statement about \K-Einstein metrics follow by Theorem \ref{boundedcurvatureintro} and Proposition \ref{C2abricci}.
 \end{proof}

\noindent We can then ask when a divisor satisfies the Grauert condition, i.e. the property of having tubular neighborhoods $U \subset M$ and $V \subset L|_D$ of $D$  and a biholomorphism $\Upsilon : U \to V$ extending the identity on $D$.
The existence of such a divisor on a complex manifold is certainly a very restrictive one, generalizing the structure 
of a product manifold with $D$ as one of the factors. In fact, since the seminal work of Grauert \cite{Gr}, it has been clear that a subtle relation exists with the property of $D$ being a {\em{splitting}} divisor in $M$ whose definition is the following

\begin{defin}
A divisor $D$ splits in $M$ if  the exact sequence 
$$0 \rightarrow TD \rightarrow TM |_D \rightarrow N \rightarrow 0$$

splits as sequence of vector bundles over $D$.
\end{defin}

\noindent In a private communication, S. Donaldson has proposed the splitting condition as the optimal one for having conical metrics
of bounded curvature. Our work is then a first step in confirming his prediction.

\noindent Clearly the Grauert property implies the splitting one, and in fact it can be considered a splitting property ``at infinite order"
as nicely explained in the work of Abate-Bracci-Tovena \cite{abt2}.

An example of divisor satisfying the Grauert property is the exceptional divisor of the blow-up of $\mathbf C^n$ at the origin. In this case $D \simeq \mathbf P^{n-1}$ and the normal bundle is the tautological line bundle $\mathcal O(-1)$, which coincides, as a complex manifold, with $\mathbf C^n$ blown-up at the origin. Since the blow-up of any smooth point in a manifold $M$ is performed by gluing a neighborhood of such $\mathbf P^{n-1}$, it follows that any exceptional divisor obtained by blowing-up a smooth point satisfies the Grauert condition.

In general, in order to get the Grauert property one must look at the cohomology groups $H^1(D, TD \otimes N^{-k})$ and $H^1(D, N^{-k})$ which are where the obstructions live \cite{Gr}.
The vanishing of the obstructions for all $k>0$ gives an isomorphism between the formal completions of $M$ and $N$ along $D$. In order to get the existence of a biholomorphism $\Upsilon$ inducing such isomorphism, one has to impose some geometrical assumptions on $D$.
In particular Grauert proved that the vanishing of $H^1(D, TD \otimes N^{-k})$ and $H^1(D, N^{-k})$ for all $k>0$ implies the Grauert property under the assumption that $D$ is exceptional (i.e. it can be blown down to a point) and the normal bundle $N$ is negative.

Negativity of the normal bundle is not a necessary condition for a divisor to have the Grauert property.
To see that consider the case when $M$ is a projective space and $D \simeq \mathbf P^{n-1}$ is a hyperplane.
The normal bundle is then $\mathcal O(1)$, which coincides, as a complex manifold, with $\mathbf P^n \setminus \{p\}$ where $p$ is any point not belonging to $D$. Thus is clear that $D$ has the Grauert property. On the other hand, Morrow-Rossi \cite{mr} proved that a divisor of a projective space having the Grauert property is in fact a hyperplane.

Finally, recalling that for a smooth complex curve in a surface are equivalent being exceptional, having negative self-intersection and having negative normal bundle, by Serre duality and standard application of Riemann-Roch Theorem one gets the following 

\begin{prop}
The following divisors have the Grauert property:
\begin{enumerate}
\item
(direct check, see above) $D$ is a hyperplane in a projective space;
\item
(direct check, see above) $D$ is the exceptional divisor of a complex manifold blown up at a smooth point;
\item
( \cite{abt2}, Section $5$) $M$ is a complex surface, $genus(D) = 0$ and $D$ has negative self-intersection;
\item
(Laufer \cite{Lau}, Chapter VI) $M$ is a complex surface, $genus(D) \geq 1$ and $D \cdot D < 4 - 4\, genus(D)$.
\end{enumerate}
\end{prop}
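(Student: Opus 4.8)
The plan is to dispatch the four cases separately: the first two are the explicit model computations already carried out in the text, and the last two are applications of Grauert's theorem via the cohomological criterion recalled above. For (1), $D$ a hyperplane in $\mathbf P^n$, one has $N=\mathcal O(1)$, whose total space is biholomorphic to $\mathbf P^n$ with a point off $D$ deleted, so a tubular neighborhood of $D$ in $\mathbf P^n$ maps biholomorphically onto a neighborhood of the zero section; for (2), $D=\mathbf P^{n-1}$ the exceptional divisor of the blow-up of a smooth point, $N=\mathcal O(-1)$ and its total space is precisely $\mathbf C^n$ blown up at the origin, and the blow-up operation is performed by gluing in a neighborhood of such a $\mathbf P^{n-1}$ — so in both cases the desired $\Upsilon$ is at hand. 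Since $L=\mathcal O_M(D)$ gives $L|_D=N$, what (3) and (4) assert is exactly that a tubular neighborhood of $D$ in the surface $M$ is biholomorphic to a neighborhood of the zero section of the total space of $N$, which I would obtain from Grauert's theorem.

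First I would record the standard equivalences for a smooth curve $D$ in a complex surface $M$: the conditions ``$D\cdot D<0$'', ``$D$ is exceptional'', and ``$N=\mathcal O_D(D)$ is negative'' all coincide, since $\deg N=D\cdot D$ and by Grauert's contraction criterion. In case (3) we assume $D\cdot D<0$ outright, and in case (4) we have $D\cdot D<4-4\,\mathrm{genus}(D)\le 0$ because $\mathrm{genus}(D)\ge 1$; hence in both cases the ``exceptional, negative normal bundle'' hypotheses needed to algebraize a formal isomorphism hold automatically, and the whole question reduces to the vanishing of the obstruction groups $H^1(D,N^{-k})$ and $H^1(D,TD\otimes N^{-k})$ for every integer $k\ge 1$.

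Next I would compute these groups by Serre duality on $D$. Writing $g=\mathrm{genus}(D)$, $d=D\cdot D=\deg N$, and using $TD^{-1}=K_D$ of degree $2g-2$, one has
\[ H^1(D,N^{-k})\cong H^0(D,K_D\otimes N^{k})^*, \qquad H^1(D,TD\otimes N^{-k})\cong H^0(D,K_D^{\otimes 2}\otimes N^{k})^*, \]
the two line bundles on the right having degrees $2g-2+kd$ and $4g-4+kd$. If $g=0$, these degrees are $-2+kd$ and $-4+kd$, both $\le -1$ since $d\le -1$ and $k\ge 1$, so both $H^0$'s on $\mathbf P^1$ vanish and case (3) follows. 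If $g\ge 1$, a line bundle of negative degree on $D$ has no nonzero global sections, so it suffices that $2g-2+kd<0$ and $4g-4+kd<0$ for all $k\ge 1$; since $d<0$ the binding case is $k=1$, where the hypothesis $d<4-4g$ gives $4g-4+d<0$, and the inequality $4-4g\le 2-2g$ (valid for $g\ge 1$) gives $2g-2+d<0$ as well. Thus both families of obstruction groups vanish and case (4) follows.

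The only genuinely delicate step is the last passage: the vanishing of the groups above produces only a \emph{formal} identification of $M$ with the total space of $N$ along $D$ (an isomorphism of $k$-th infinitesimal neighborhoods for every $k$), and upgrading this to an honest biholomorphism of open neighborhoods is exactly Grauert's convergence/algebraization theorem, whose hypotheses — $D$ exceptional and $N$ negative — are precisely the ones we checked hold automatically here. Everything else (the equivalences for curves on surfaces, the degree bookkeeping, Serre duality, and the two explicit model checks for (1) and (2)) is routine, and for the precise forms of these statements I would cite \cite{Gr}, \cite{abt2}, and \cite{Lau}.
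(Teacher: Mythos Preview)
Your proposal is correct and follows precisely the route the paper indicates: the paper gives no formal proof of this proposition, only the one-line preamble ``recalling that for a smooth complex curve in a surface are equivalent being exceptional, having negative self-intersection and having negative normal bundle, by Serre duality and standard application of Riemann-Roch Theorem one gets the following'' together with the parenthetical references, and your argument is exactly a careful unpacking of that sentence. Your handling of cases (1) and (2) reproduces the two ``direct check'' paragraphs earlier in the text, and your Serre-duality degree computations for (3) and (4), together with the invocation of Grauert's convergence theorem under the exceptional/negative hypotheses, are what the paper is gesturing at.
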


The list above is just as a source of nontrivial (i.e. not isomorphic to products) examples and is by far not an exhaustive list of known results. More information on the subject can be found in the work Abate-Bracci-Tovena \cite{abt2} and references therin.

\end{document}